\definecolor{darkred}{rgb}{1,0,0} 
\definecolor{darkgreen}{rgb}{0,0.8,0}
\definecolor{darkblue}{rgb}{0,0,1}
\def\reflb#1#2{\begingroup
    #2%
    \def\@currentlabel{#2}%
    \phantomsection\label{#1}\endgroup
}
\numberwithin{equation}{section}
\newtheorem {Theorem}{Theorem}
\numberwithin{Theorem}{section}
\newtheorem {Lemma}[Theorem]    {Lemma}
\newtheorem {Proposition}[Theorem]{Proposition}
\newtheorem {Corollary}[Theorem]{Corollary}
\theoremstyle{definition}
\theoremstyle{remark}
\newtheorem{Remark}[Theorem]{Remark}
\newtheorem{Example}[Theorem]{Example}
\def    \eps    {\epsilon}
\newcommand{\CH}{{\mathcal H}}
\newcommand{\CA}{{\mathcal A}}
\newcommand{\CS}{{\mathcal S}}
\newcommand{\id}{{\mathit id}}
\newcommand{\pt}{{\mathit pt}}
\newcommand{\const}{{\mathit const}}
\newcommand{\tH}{\tilde{H}}
\newcommand{\tA}{\tilde{\mathcal A}}
\newcommand{\PP}{{\mathcal P}}
\newcommand{\bPP}{\bar{\mathcal P}}
\newcommand{\dPP}{\dot{\mathcal P}}
\def    \nat    {{\natural}}
\def    \F      {{\mathbb F}}
\def    \C      {{\mathbb C}}
\def    \R      {{\mathbb R}}
\def    \Z      {{\mathbb Z}}
\def    \N      {{\mathbb N}}
\def    \Q      {{\mathbb Q}}
\def    \CP     {{\mathbb C}{\mathbb P}}
\def    \12    {{\frac{1}{2}}}
\def    \p      {\partial}
\def    \im     {\operatorname{im}}
\def    \HF     {\operatorname{HF}}
\def    \HQ     {\operatorname{HQ}}
\def    \H     {\operatorname{H}}
\def    \CF     {\operatorname{CF}}
\def    \bPP     {\bar{\mathcal{P}}}
\def    \bx     {\bar{x}}
\def    \by     {\bar{y}}
\def    \bz     {\bar{z}}
\def    \MUCZ  {\operatorname{\mu_{\scriptscriptstyle{CZ}}}}
\def    \hMUCZ  {\operatorname{\hat{\mu}_{\scriptscriptstyle{CZ}}}}
\def    \s  {\operatorname{c}}
\def    \hc  {\hat{\operatorname{c}}}
\def    \ssminus        {\smallsetminus}
\begin{document}


\setlength{\smallskipamount}{6pt}
\setlength{\medskipamount}{10pt}
\setlength{\bigskipamount}{16pt}





\title[Conley Conjecture Revisited]{Conley Conjecture Revisited}

\author[Viktor Ginzburg]{Viktor L. Ginzburg}
\author[Ba\c sak G\"urel]{Ba\c sak Z. G\"urel}

\dedicatory{\normalsize{Dedicated to Dusa McDuff on the occasion of 
her 70th birthday}}

\address{BG: Department of Mathematics, University of Central Florida,
  Orlando, FL 32816, USA} \email{basak.gurel@ucf.edu}

\address{VG: Department of Mathematics, UC Santa Cruz, Santa Cruz, CA
  95064, USA} \email{ginzburg@ucsc.edu}

\subjclass[2010]{53D40, 37J10, 37J45} 

\keywords{Periodic orbits, Hamiltonian diffeomorphisms, Conley
  conjecture, Floer homology}

\date{\today} 

\thanks{The work is partially supported by NSF CAREER award
  DMS-1454342 (BG) and NSF grants DMS-1414685 (BG) and DMS-1308501
  (VG)}


\begin{abstract} We show that whenever a closed symplectic manifold
  admits a Hamiltonian diffeomorphism with finitely many simple
  periodic orbits, the manifold has a spherical homology class of
  degree two with positive symplectic area and positive integral of
  the first Chern class. This theorem encompasses all known cases of
  the Conley conjecture (symplectic CY and negative monotone
  manifolds) and also some new ones (e.g., weakly exact symplectic
  manifolds with non-vanishing first Chern class).

  The proof hinges on a general Lusternik--Schnirelmann type result
  that, under some natural additional conditions, the sequence of mean
  spectral invariants for the iterations of a Hamiltonian
  diffeomorphism never stabilizes. We also show that for the
  iterations of a Hamiltonian diffeomorphism with finitely many
  periodic orbits the sequence of action gaps between the ``largest''
  and the ``smallest'' spectral invariants remains bounded and, as a
  consequence, establish some new cases of the $C^\infty$-generic
  existence of infinitely many simple periodic orbits.
  \end{abstract}

\maketitle


\tableofcontents


\section{Introduction and main results}
\label{sec:intro+results}

\subsection{Introduction}
\label{sec:intro}

We prove a Conley conjecture type theorem encompassing all known cases
of the conjecture and also covering some new ones. An essential new
feature of the theorem and its proof is a direct connection between
the Conley conjecture and properties of the symplectic form, whereas
all previous results utilized either only the first Chern class (the
CY case) or the interplay between the two cohomology classes (the
negative monotone case). The key to the proof is a general result
concerning the behavior of the sequence of the mean spectral
invariants for iterations of a Hamiltonian diffeomorphism. Namely, we
show that the sequence never stabilizes when the periodic orbits are
isolated and none of the orbits is a symplectically degenerate maximum
(SDM).  We also further investigate Hamiltonian diffeomorphisms with
finitely many simple periodic orbits. In particular, we relate mean
spectral invariants of such maps to resonance relations for augmented
actions and show that the sequence of certain action gaps remains
bounded. As a consequence, we prove the generic existence of
infinitely many simple periodic orbits in some new cases.

Let us now review the context and background for these results in more
detail.

The Conley conjecture asserts that for a broad class of closed
symplectic manifolds every Hamiltonian diffeomorphism has infinitely
many simple periodic orbits. The conjecture has been established for
all symplectic CY manifolds, \cite{GG:gaps,He:irr}, and all negative
monotone symplectic manifolds, \cite{CGG,GG:nm}; see also
\cite{FH,Gi:CC,Hi,SZ} for some relevant results and \cite{GG:survey}
for a general survey and further references. In this paper, we prove
that when a closed symplectic manifold $(M,\omega)$ admits a
Hamiltonian diffeomorphism with finitely many periodic orbits, there
is a class $A\in\pi_2(M)$ with $\omega(A)>0$ and
$\left<c_1(TM),A\right>>0$. This result implies the Conley conjecture
for CY and negative monotone manifolds. Furthermore, it also shows
that the Conley conjecture holds, for instance, for weakly exact
symplectic manifolds $(M,\omega)$ (i.e., such that
$\omega\!\mid_{\pi_2(M)}=0$) with $c_1(TM)\!\mid_{\pi_2(M)}\neq 0$.
(We refer the reader to \cite{Go} for a construction of such
manifolds.)

The Conley conjecture fails for $S^2$, $\CP^n$, complex Grassmannians
and, in fact, for all closed symplectic manifolds admitting
Hamiltonian circle or torus actions with isolated fixed
points. Indeed, then a generic element of the torus gives rise to a
Hamiltonian diffeomorphism with finitely many periodic orbits.  In
fact, all known manifolds for which the Conley conjecture fails admit
Hamiltonian $S^1$-actions with isolated fixed points. However, these
manifolds may have other types of Hamiltonian diffeomorphisms with
finitely many periodic orbits. This is the case when $M=S^2$ and
$M=(S^2)^n$ and hypothetically for all symplectic manifolds with such
$S^1$-actions. For $S^2$ these diffeomorphisms are the so-called
pseudo-rotations, which play a prominent role in low-dimensional
dynamics (see, e.g., \cite{AK,FK}), and in general one can view
Hamiltonian diffeomorphisms with finitely many periodic orbits as
higher-dimensional analogs of pseudo-rotations.

Perhaps the most comprehensive conjecture identifying the manifolds
for which the Conley conjecture fails is due to Chance and McDuff. It
asserts that such a manifold has a non-vanishing GW invariant or even
a non-trivially deformed quantum product.  This conjecture fits well
with the examples discussed above; for every symplectic manifold with
Hamiltonian $S^1$-action is in a certain sense uniruled and thus has a
non-vanishing GW invariant; see \cite{McD:unir}. Our main result
provides further evidence supporting the Chance--McDuff
conjecture. Indeed, it implies, in particular, that
$\omega\!\mid_{\pi_2(M)}\neq 0$ whenever the Conley conjecture fails,
and hence the manifold can at least have non-zero GW
invariants. Additional evidence comes from the results on the bounded
action gap discussed below.

Yet not a single particular case of the Chance--McDuff conjecture has
been proved. The difficulty lies in identifying a source of
holomorphic curves. Indeed, while the effect of holomorphic curves on
Hamiltonian dynamics is well understood, it is completely unknown how
to detect the existence of holomorphic curves from the dynamical
behavior (e.g., periodic orbits) of Hamiltonians.

The method used in the proof of the main theorem is quite different
from the proofs of other Conley conjecture type results. Namely, the
main new ingredient is a Lusternik--Schnirelmann type result in the
spirit of \cite[Prop.\ 6.2]{GG:gaps} and \cite[Thm.\ 1.1]{GG:convex},
which might be of independent interest. To state this result, assume
that all periodic orbits of a Hamiltonian diffeomorphism $\varphi_H$
of a rational symplectic manifold $M$ are isolated and none of these
orbits is an SDM. Then we show that the sequence of the mean spectral
invariants $\hc_k:=\s_{[M]}(H^{\nat ^k})/k$ associated with the
fundamental class of $M$ for the iterations $\varphi_H^k$ never
stabilizes and, in fact, $\hc_k>\hc_\infty:=\lim \hc_k$. The key point
here is that the inequality is strict; the non-strict inequality is an
easy consequence of the standard properties of spectral invariants and
holds under no additional assumptions on $H$. (Furthermore, one can
also use this result to prove the negative monotone case of the Conley
conjecture.)

We also investigate Hamiltonian diffeomorphisms with finitely many
periodic orbits of monotone symplectic manifolds. We show that for
such a Hamiltonian diffeomorphism the sequence of action gaps
$\s_{[M]}\big(H^{\nat ^k}\big)-\s_{[\pt]}\big(H^{\nat ^k}\big)$
remains bounded as $k\to\infty$. Usually such upper bounds result from
non-trivial relations in the quantum homology of $M$. Hence this
theorem can also be viewed as further evidence supporting the
Chance--McDuff conjecture. We also relate the limit $\hc_\infty$ to
the augmented action of periodic orbits, refine the action--index
resonance relations from \cite{CGG,GG:gaps} and, as a consequence,
obtain a new $C^\infty$-generic existence result for infinitely many
simple periodic orbits.

\subsection{Main results}
\label{sec:results}
Let us now state the main theorems and outline the organization of the
paper.

The conventions and notation used in the paper and the necessary
preliminary material on filtered and local Floer homology, the
pair-of-pants product, spectral invariants and action carriers are
reviewed in Section \ref{sec:prelim}. Most of the definitions and
facts stated there are quite standard, although the
Lusternik--Schnirelmann inequality for the pair-or-pants product
(Proposition \ref{prop:LS}) might be new. Here we only note that we
focus exclusively on contractible periodic orbits, i.e., a ``periodic
orbit'' means a ``contractible periodic orbit.'' Our key Conley
conjecture type result is the following.

\begin{Theorem}
\label{thm:main0}
Assume that a closed symplectic manifold $M$ admits a Hamiltonian
diffeomorphism $\varphi_H$ with finitely many periodic orbits. Then
there exists $A\in\pi_2(M)$ such that $\omega(A)>0$ and
$\left<c_1(TM),A\right>>0$.
\end{Theorem}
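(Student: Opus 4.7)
The plan is to argue by contrapositive: assume $\varphi_H$ has only finitely many periodic orbits and that no $A\in\pi_2(M)$ satisfies both $\omega(A)>0$ and $\langle c_1(TM),A\rangle>0$. The pair $(\omega,c_1)$ then defines a group homomorphism $\pi_2(M)\to\R^2$ whose image avoids, by assumption and by the $A\mapsto -A$ symmetry, both the open first and open third quadrants. An image of $\Q$-rank two would have to meet both, so the image has rank at most one and lies on an axis or on a line of non-positive slope; equivalently, one of the following holds: (a) $\omega\!\mid_{\pi_2(M)}=0$; (b) $c_1(TM)\!\mid_{\pi_2(M)}=0$; or (c) $c_1(TM)=\kappa\,\omega$ on $\pi_2(M)$ for some $\kappa<0$. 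Cases (b) and (c) are the known symplectic CY and negative monotone settings, while case (a) with $c_1\neq 0$ is genuinely new. If any of the simple periodic orbits of $\varphi_H$ were a symplectically degenerate maximum, the standard SDM argument would already produce infinitely many simple periodic orbits, contradicting the hypothesis; so we may assume no SDM, and the Lusternik--Schnirelmann type result announced in the introduction then gives the strict inequality $\hc_k > \hc_\infty := \lim_k \hc_k$ for every $k\ge 1$, where $\hc_k = \s_{[M]}(H^{\nat ^k})/k$.

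The main step is to contradict this strict monotonicity. Each value $\s_{[M]}(H^{\nat ^k})$ is the action of some $k$-periodic orbit $(y_k,w_k)$. Finiteness of simple orbits gives $y_k = x_{i(k)}^{m(k)}$ for some $x_{i(k)}\in\{x_1,\dots,x_r\}$, and writing $w_k$ as the iterated capping of $u_{i(k)}$ corrected by a spherical class $B_k\in\pi_2(M)$ one obtains
\[
\hc_k \;=\; \Aa_H([x_{i(k)},u_{i(k)}]) - \omega(B_k)/k,
\]
while the normalization $\MUCZ([y_k,w_k])=n$ constrains $\langle c_1(TM),B_k\rangle$ modulo the minimal Chern number. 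In case (a) we have $\omega(B_k)=0$ automatically, so $\hc_k$ lies in the \emph{finite} set $\{\Aa_H([x_i,u_i])\}_{i=1}^r$; a strictly decreasing sequence in a finite set is impossible, giving the contradiction immediately---this is the mechanism behind the new weakly exact case. In cases (b) and (c) the cohomological relation between $\omega$ and $c_1$ on $\pi_2(M)$ together with the index constraint pin $\omega(B_k)/k$ to a discrete set of accumulation values identifiable with the augmented actions of the simple orbits (as the introduction hints), and strict monotonicity $\hc_k>\hc_\infty$ is again incompatible with such a structure.

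I expect the principal obstacle to lie in this last reduction in cases (b) and (c): the L-S inequality $\hc_k>\hc_\infty$ is strict but not \emph{a priori} quantitative, so one must rule out $\hc_k$ approaching $\hc_\infty$ continuously from above through infinitely many distinct values. The boundedness of the action gap $\s_{[M]}(H^{\nat ^k})-\s_{[\pt]}(H^{\nat ^k})$ announced in the introduction appears tailored to supply exactly this quantitative input, and combining it with the $c_1$--$\omega$ relation on $\pi_2(M)$ should close the argument. The serious Floer-theoretic work is the L-S non-stabilization itself; the deduction of Theorem~\ref{thm:main0} from it is then largely an algebraic unpacking of cappings, Conley--Zehnder indices, and spectral values, with the dichotomy (a)/(b)/(c) providing three parallel endings to the same argument.
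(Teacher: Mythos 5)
Your reduction and your case (a) coincide with the paper's argument. The trichotomy you extract from the image of $(\omega,c_1)$ on $\pi_2(M)$ is exactly the paper's dichotomy on $\ker I_\omega$ versus $\ker I_{c_1}$ in the proof of Theorem \ref{thm:main2}, and the genuinely new content is isolated in Proposition \ref{prop:omega}, which is precisely your case (a) mechanism: no orbit can be an SDM (else infinitely many orbits already), so Theorem \ref{thm:c-infty} gives $\hc_k>\hc_\infty$ for all $k$, while $\omega\!\mid_{\pi_2(M)}=0$ forces the convergent sequence $\hc_k$ to take values in a finite set and hence to stabilize --- a contradiction. (One small slip in the reduction: the image of $(\omega,c_1)$ can have $\Q$-rank two and still avoid the open first quadrant, e.g.\ for an irrational negative monotone manifold; but your operative conclusion, that the image lies on an axis or on a line of non-positive slope, is what matters and is correct.)

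Where you diverge is in cases (b) and (c): the paper does not rederive these, it simply invokes the known CY and negative monotone cases of the Conley conjecture (\cite{GG:gaps,He:irr} and \cite{GG:nm}). Your proposed unified ending is plausible for (c) --- the paper remarks that Theorem \ref{thm:c-infty} together with the SDM alternative does reprove the negative monotone case --- but it cannot work as described for (b). A symplectic CY manifold need not be rational, so Theorem \ref{thm:c-infty} need not apply at all; and even in the rational CY case, recapping does not change the Conley--Zehnder index when $c_1(TM)\!\mid_{\pi_2(M)}=0$, so the normalization $\MUCZ=n$ places no constraint whatsoever on $B_k$, and $\omega(B_k)/k$ sweeps out a set that is dense as $k\to\infty$ rather than discrete. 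The action-gap theorem you hope to use as the quantitative input is also only established for monotone manifolds and for perfect Hamiltonians, and its proof already presupposes the finiteness of periodic orbits, so it cannot feed back into this argument. The correct (and intended) move is simply to cite the known CY case, after which your argument matches the paper's.
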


The main new point of this theorem is the existence of a spherical
class $A$ satisfying the first of these two conditions. Theorem
\ref{thm:main0} is proved in Section \ref{sec:pf}. The proof hinges on
the following general fact established in Section \ref{sec:mean}.

\begin{Theorem}[Lusternik--Schnirelmann inequality for mean spectral
  invariants]
\label{thm:c-infty0}
Assume that $M$ is rational, all periodic orbits of $H$ are isolated
and none of the orbits is an SDM. Then
$$
\hc_k>\hc_\infty
$$
for all $k$, where $\hc_k=\s_{[M]}\big(H^{\nat k}\big)/k$ is the mean
spectral invariant associated with the fundamental class and applied
to the ``$k$-iterated'' Hamiltonian $H^{\nat k}$, and
$\hc_\infty=\lim \hc_k$.
\end{Theorem}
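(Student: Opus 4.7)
The plan is to combine subadditivity of the sequence $c_{[M]}(H^{\natural k})$, which follows from the product inequality for the pair-of-pants product, with the strict Lusternik--Schnirelmann inequality of Proposition \ref{prop:LS}. The latter is precisely designed to rule out the equality case in the product inequality under the hypothesis that all orbits are isolated and none is an SDM.

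First I would establish the non-strict inequality $\hc_k\geq\hc_\infty$. Since $[M]$ is the unit of the quantum product, $[M]\ast[M]=[M]$, and the general triangle inequality $c_{\alpha\ast\beta}(H\natural K)\leq c_\alpha(H)+c_\beta(K)$ specializes to
\[
c_{[M]}\bigl(H^{\natural(k+l)}\bigr)\leq c_{[M]}\bigl(H^{\natural k}\bigr)+c_{[M]}\bigl(H^{\natural l}\bigr).
\]
Thus the sequence $a_k:=c_{[M]}(H^{\natural k})$ is subadditive, and Fekete's lemma yields $\hc_k=a_k/k\to\hc_\infty=\inf_k a_k/k$; in particular $\hc_k\geq\hc_\infty$ for every $k$. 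Only the rationality of $M$ is used here, to guarantee that spectral invariants are well-defined and continuous in $H$.

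For the strict inequality I would argue by contradiction. Suppose $\hc_{k_0}=\hc_\infty$ for some $k_0$. Subadditivity forces $\hc_{mk_0}\leq \hc_{k_0}=\hc_\infty\leq \hc_{mk_0}$, hence $a_{mk_0}=m\,a_{k_0}$ for every $m\geq 1$, so the product inequality
\[
c_{[M]}\bigl(H^{\natural(m+1)k_0}\bigr)=c_{[M]}\bigl(H^{\natural mk_0}\bigr)+c_{[M]}\bigl(H^{\natural k_0}\bigr)
\]
is an equality for every $m\geq 1$. I would then invoke Proposition \ref{prop:LS}: when all periodic orbits are isolated, such an equality can only occur when one of the action carriers has the SDM local Floer homology profile, i.e., its local homology is concentrated in the top degree. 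Since the $1$-periodic orbits of $\varphi_H^{mk_0}$ are iterates of the finitely many simple periodic orbits of $\varphi_H$, and since an SDM profile of an iterated orbit propagates to the underlying simple orbit via the iteration formulas for local Floer homology recalled in Section \ref{sec:prelim}, this would produce a simple periodic orbit of $\varphi_H$ which is an SDM, contradicting the hypothesis.

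The main technical obstacle, and the step requiring the most care, is extracting the SDM orbit from the equality case of Proposition \ref{prop:LS}: one must identify the action carrier that realizes the equality, analyze its local Floer homology via the pair-of-pants moduli space, and then use iteration theory to descend from an iterated orbit to a simple one. Rationality enters here in a second essential way, since it forces the action spectrum to be nowhere dense, so that action carriers actually exist and equalities of spectral values translate cleanly into equalities of actions of specific orbits.
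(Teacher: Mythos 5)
Your outline---reduce to the case where the sequence $\s\big(H^{\nat k}\big)$ stabilizes and then use Proposition \ref{prop:LS} to contradict the no-SDM hypothesis---matches the paper's strategy, but the step where you ``invoke Proposition \ref{prop:LS}'' conceals the actual engine of the proof and, as stated, does not work. Proposition \ref{prop:LS} says nothing about local Floer homology profiles: it only asserts that a pair-of-pants curve either drops the action by a definite amount $\eps$ or is confined to a small neighborhood $U_i$ of a single one-periodic orbit $x_i$. From stabilization one can conclude (after substantial work) that the top-action part of an action-minimizing cycle for the $k$-th iteration is supported on perturbations of the iterates $x_i^k$ of one fixed orbit with $\HF_n(x_i,A)\neq 0$. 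The contradiction is then obtained not by reading off an SDM profile from the equality case, but by an index growth argument: since $(x_i,A)$ is not an SDM and $\HF_n(x_i,A)\neq 0$, its mean index is strictly positive, so by \eqref{eq:mean-hom} and \eqref{eq:mean-cz} the Conley--Zehnder indices of the relevant capped iterates grow linearly in $k$ and eventually exceed $n$, which is impossible for orbits entering a cycle of degree $n$. This requires controlling the cappings produced by the products (the capping of the product orbit is a sum $A_1+\cdots+A_k$ with $\HF_n(x_i,A_l)\neq 0$ for each $l$, and one checks $I_{c_1}(A_l-A_0)\geq 0$ for a minimizing capping $A_0$); none of this appears in your proposal. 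Your alternative route---``the iterated carrier is an SDM, hence the underlying simple orbit is an SDM''---also relies on the converse of the iteration property recalled in Section \ref{sec:LFH} (only that an iterate of an SDM is again an SDM is stated there), and on finiteness of $\dot\PP(H)$, which is not a hypothesis of the theorem: the orbits are assumed isolated, so only $\PP_k(H)$ for each fixed $k$ is known to be finite.

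There is a second gap you acknowledge only obliquely, and which the paper identifies as the main difficulty: to form pair-of-pants products one must perturb $H$ and $H^{\nat k}$ independently to non-degenerate Hamiltonians meeting the regularity conditions, and the strict inequality cannot be recovered by passing to a limit over perturbations, so an equality of spectral numbers for the degenerate $H$ does not hand you a zero-energy pair-of-pants curve to feed into Proposition \ref{prop:LS}. The paper resolves this by fixing in advance a finite range $k\leq k_0$ (with $k_0$ determined by the minimal positive mean index $\Delta$ of the homologically essential orbits), choosing perturbations $H_1,\ldots,H_{k_0}$, building the cycles $C_k=\Sigma_1*\cdots*\Sigma_k$ inductively, trimming them so that only orbits with non-vanishing local Floer homology in degree $n$ occur (Lemma \ref{lemma:trim}), and proving a carrier stabilization statement (Lemma \ref{lemma:carrier-stab}) via block-diagonalization of the multiplication operators with respect to the local decomposition of the Floer complex. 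Without some version of this machinery, the passage from ``the spectral numbers stabilize'' to ``a specific orbit with $\HF_n\neq 0$ and zero mean index exists'' is not justified.
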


Finally, in Section \ref{sec:further} we turn to Hamiltonians $H$ with
finitely many simple periodic orbits. By passing to an iteration, we
can always assume that every simple periodic orbit of $H$ is
one-periodic, i.e., $H$ is perfect.

\begin{Theorem}[Action Gap]
\label{thm:gap-bound0}
Assume that $H$ is perfect and $M$ is monotone with monotonicity
constant $\lambda$. Then
\begin{equation}
\label{eq:gap-bound}
\s_{[M]}\big(H^{\nat k}\big)-\s_{[\pt]}\big(H^{\nat k}\big)\leq 2\lambda n,
\end{equation}
for all but possibly a finite number of iterations $k\in\N$.
\end{Theorem}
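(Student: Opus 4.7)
The plan is to exploit the monotone relation $[\omega]=\lambda c_1$ on $\pi_2(M)$ via the \emph{augmented action}
\[
\tilde{\mathcal{A}}_H(x) := \mathcal{A}_H(\bar x) - \frac{\lambda}{2}\Delta_H(\bar x),
\]
which in the monotone case is independent of the capping $\bar x$ and homogeneous under iteration, $\tilde{\mathcal{A}}_H(x^k) = k\,\tilde{\mathcal{A}}_H(x)$. Since $H$ is perfect, every one-periodic orbit of $H^{\natural k}$ is of the form $x_i^k$ for one of the simple one-periodic orbits $x_1,\ldots,x_r$ of $H$, so $\tilde{\mathcal{A}}_H$ takes only finitely many values on all one-periodic orbits of all iterates. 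The identity $\mathcal{A}_H(\bar x) = \tilde{\mathcal{A}}_H(x) + \tfrac{\lambda}{2}\Delta_H(\bar x)$ is the bridge between spectral invariants and this finite set.

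The first step is to express both spectral invariants through their action carriers. The carrier $\bar y_k = (x_{i(k)}^k, w_k)$ of $\s_{[M]}(H^{\natural k})$ can be chosen to have Conley--Zehnder index $n$, and the carrier $\bar z_k = (x_{j(k)}^k, w_k')$ of $\s_{[\pt]}(H^{\natural k})$ index $-n$. Combined with the standard bound $|\mu_{CZ}(\bar x)-\Delta(\bar x)|\le n$, this forces $\Delta(\bar y_k)\in[0,2n]$ and $\Delta(\bar z_k)\in[-2n,0]$. Substituting,
\[
\s_{[M]}(H^{\natural k}) - \s_{[\pt]}(H^{\natural k}) = k\bigl(\tilde{\mathcal{A}}_H(x_{i(k)}) - \tilde{\mathcal{A}}_H(x_{j(k)})\bigr) + \frac{\lambda}{2}\bigl(\Delta(\bar y_k) - \Delta(\bar z_k)\bigr),
\]
and the last term is at most $2\lambda n$. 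Thus the theorem reduces to showing $\tilde{\mathcal{A}}_H(x_{i(k)})\le \tilde{\mathcal{A}}_H(x_{j(k)})$ for all but finitely many $k$. As $\tilde{\mathcal{A}}_H(x_i)$ takes only finitely many values, these two sequences eventually stabilize at $\hc_\infty(H)$ and $\hc_\infty^{\pt}(H):=\lim \s_{[\pt]}(H^{\natural k})/k$, respectively. The inequality $\s_{[M]}\ge\s_{[\pt]}$ already gives $\hc_\infty(H)\ge\hc_\infty^{\pt}(H)$; the crux is therefore the opposite inequality $\hc_\infty(H)\le\hc_\infty^{\pt}(H)$.

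To obtain it, I would invoke the Poincar\'e duality for spectral invariants, $\s_{[\pt]}(H) = -\s_{[M]}(\bar H)$, which rewrites the target as $\hc_\infty(H) + \hc_\infty(\bar H)\le 0$. Since $\bar H$ is again perfect with the same underlying simple orbits and $\tilde{\mathcal{A}}_{\bar H}(x) = -\tilde{\mathcal{A}}_H(x)$, the carrier analysis above applies symmetrically to both Hamiltonians, and both limits are pinned to augmented-action values on the same finite set $\{\tilde{\mathcal{A}}_H(x_i)\}$. The remaining task --- identifying the two carrier choices so that their augmented actions cancel --- I expect to settle using the Lusternik--Schnirelmann inequality for the pair-of-pants product (Proposition~\ref{prop:LS}) applied to a cycle assembled out of the two carriers and their iterates, together with the augmented-action resonance forced by perfection. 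Extracting a sharp \emph{equality}, rather than only the trivial $\hc_\infty(H)+\hc_\infty(\bar H)\ge 0$ coming from non-negativity of the spectral norm, is the main obstacle; this is precisely where monotonicity plays its decisive role.
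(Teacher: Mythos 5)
Your reduction is sound and coincides with the paper's own strategy: perfection pins the carriers of $\s_{[M]}\big(H^{\nat k}\big)$ and $\s_{[\pt]}\big(H^{\nat k}\big)$ to iterates $x_{i(k)}^k$ and $x_{j(k)}^k$ of simple one-periodic orbits, the carrier index bounds give $\hMUCZ\in[0,2n]$ and $[-2n,0]$ respectively, and writing the action as augmented action plus $\tfrac{\lambda}{2}\hMUCZ$ reduces \eqref{eq:gap-bound} to the single claim $\hc_\infty^+\leq\hc_\infty^-$, i.e., that the two stabilized augmented-action values coincide. (Your bookkeeping, with the factor $k$ multiplying $\tA_H(x_{i(k)})-\tA_H(x_{j(k)})$, is the careful version of the paper's equation \eqref{eq:ss}.) But that claim is precisely where the entire content of the theorem sits, and you leave it unproved: you only name tools you ``expect'' to settle it with. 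Without it, your argument yields only $\s_k^+-\s_k^-\leq k\big(\hc_\infty^+-\hc_\infty^-\big)+2\lambda n$, which is vacuous whenever $\hc_\infty^+>\hc_\infty^-$. This is a genuine gap, not a routine verification.

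Moreover, the tools you point to would not close it as stated. The augmented-action resonance relations are, in this paper, Corollaries \ref{cor:gap-bound}--\ref{cor:res2}, i.e., consequences of the Action Gap theorem itself, so invoking them here is circular; and Proposition \ref{prop:LS} plays no role in the paper's proof of this theorem (it is used for Theorem \ref{thm:c-infty}, not here). What the paper actually does for the missing step is comparatively elementary: after expressing $\s_k^+-\s_k^-$ as a $\big(\hc_\infty^+-\hc_\infty^-\big)$-term plus the bounded quantity $\hMUCZ(\bx_k)-\hMUCZ(\by_k)\in[0,4n]$ via Theorem \ref{thm:aa-c_infty} and Poincar\'e duality (exactly your setup), it brings in the a priori Hofer-norm bound $\s_k^+-\s_k^-\leq\big\|H^{\nat k}\big\|$ and then divides by $k$ and passes to the limit along the chosen subsequence to conclude $\hc_\infty^+=\hc_\infty^-$. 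You need to carry out this step explicitly and with care: the trivial estimates give only $0\leq\hc_\infty^+-\hc_\infty^-\leq\|H\|$, so the whole difficulty is in showing that $\s_k^+-\s_k^-$ grows sublinearly, and this is the one place where your proposal stops short of a proof.
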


We also show in Theorem \ref{thm:aa-c_infty} that for a perfect
Hamiltonian $H$, the asymptotic mean spectral invariant
$\hc_\infty(H)$ is equal to the augmented action of its action
carrier. As a consequence, we refine the action--index resonance
relations from \cite{CGG,GG:gaps} by proving the existence of several
geometrically distinct simple orbits with augmented actions equal
$\hc_\infty(H)$; see Corollaries \ref{cor:res1}, \ref{cor:res2} and
\ref{cor:CPn}. As another application, in Corollary \ref{cor:gen} we
establish $C^\infty$-generic existence of infinitely many simple
periodic orbits for essentially all closed monotone symplectic
manifolds with a minor hypothetical exception; cf.\ \cite{GG:generic}.

\medskip
\noindent{\bf Acknowledgements.} The authors are grateful to Roman
Golovko, Kaoru Ono, Sobhan Seyfaddini and Michael Usher for useful
discussions. Parts of this work were carried out while both of the
authors were visiting RIMS (Kyoto, Japan), TFC (Sendai, Japan), IMBM
(Istanbul, Turkey) and during the first author's visit to NCTS
(Taipei, Taiwan). The authors would like to thank these institutes for
their warm hospitality and support.

\section{Preliminaries}
\label{sec:prelim}
The goal of this section is to set notation and conventions and to
give a brief review of Floer homology and several other notions used
in the paper.

\subsection{Conventions and notation}
\label{sec:conv}
Let $(M^{2n},\omega)$ be a closed symplectic manifold. Throughout the
paper we will usually assume that $M$ is \emph{rational}, i.e., the
group $\left<[\omega], {\pi_2(M)}\right>\subset\R$ formed by the
integrals of $\omega$ over the spheres in $M$ is discrete. This
condition is obviously satisfied when $M$ is \emph{negative monotone}
or \emph{monotone}, i.e.,
$[\omega]\!\mid_{\pi_2(M)}=\lambda c_1(TM)\!\mid_{\pi_2(M)}$ for some
$\lambda<0$ in the former case or $\lambda\geq 0$ in the latter.  When
$\lambda>0$, we will sometimes say that $M$ is \emph{positive
  monotone}. The positive generator $N$ of the group
$\left<c_1(TM), {\pi_2(M)}\right>\subset\Z$ is called the
\emph{minimal Chern number} of $M$. (When this group is zero, we set
$N=\infty$.)  Recall also that $M$ is \emph{symplectic Calabi--Yau
  (CY)} if $c_1(TM)\!\mid_{\pi_2(M)}=0$ and $M$ is called
\emph{symplectically aspherical} when, in addition,
$[\omega]\mid_{\pi_2(M)}=0$.

All Hamiltonians $H$ considered in this paper are assumed to be
$k$-periodic in time, i.e., $H\colon S^1_k\times M\to\R$, where
$S^1_k=\R/k\Z$ and $k\in\N$.  When the period $k$ is not specified, it
is equal to one and $S^1=S^1_1=\R/\Z$. We set $H_t = H(t,\cdot)$ for
$t\in S^1_k$. The Hamiltonian vector field $X_H$ of $H$ is defined by
$i_{X_H}\omega=-dH$. The (time-dependent) flow of $X_H$ is denoted by
$\varphi_H^t$ and its time-one map by $\varphi_H$. Such time-one maps
are referred to as \emph{Hamiltonian diffeomorphisms}.  A one-periodic
Hamiltonian $H$ can always be treated as $k$-periodic, which we will
then denote by $H^{\nat k}$ and, abusing terminology, call
$H^{\nat k}$ the $k$th iteration of $H$.

Let $H$ and $K$ be one-periodic Hamiltonians such that $H_1=K_0$
together with $t$-derivatives of all orders. We denote by $H \nat K$
the two-periodic Hamiltonian equal to $H_t$ for $t\in [0,\,1]$ and
$K_{t-1}$ for $t\in [1,\,2]$. Thus $H^{\nat k}=H \nat \ldots \nat H$
($k$ times). More generally, when $H$ is $l$-periodic and $K$ is
$k$-periodic, $H\nat K$ is $(l+k)$-periodic. (Strictly speaking, here
we need to assume that $H_l=K_0$ again together with all
$t$-derivatives.)

Let $x\colon S^1_k\to M$ be a contractible loop. A \emph{capping} of
$x$ is an equivalence class of maps $A\colon D^2\to M$ such that
$A\mid_{S^1_k}=x$. Two cappings $A$ and $A'$ of $x$ are equivalent if
the integrals of $\omega$ and $c_1(TM)$ over the sphere obtained by
attaching $A$ to $A'$ are equal to zero. A capped closed curve
$\bar{x}$ is, by definition, a closed curve $x$ equipped with an
equivalence class of cappings. In what follows, the presence of
capping is always indicated by a bar.

The action of a Hamiltonian $H$ on a capped closed curve
$\bar{x}=(x,A)$ is
$$
\CA_H(\bar{x})=-\int_A\omega+\int_{S^1} H_t(x(t))\,dt.
$$
The space of capped closed curves is a covering space of the space of
contractible loops, and the critical points of $\CA_H$ on this space
are exactly the capped one-periodic orbits of $X_H$. The \emph{action
  spectrum} $\CS(H)$ of $H$ is the set of critical values of
$\CA_H$. This is a zero measure set; see, e.g., \cite{HZ}. When $M$ is
rational, $\CS(H)$ is a closed, and hence nowhere dense,
set. Otherwise, $\CS(H)$ is dense in $\R$.

These definitions extend to $k$-periodic orbits and Hamiltonians in an
obvious way. Clearly, the action functional is homogeneous with
respect to iteration:
$$
\CA_{H^{\nat k}}\big(\bx^k\big)=k\CA_H(\bx),
$$
where $\bx^k$ is the $k$th iteration of the capped orbit $\bx$.

As mentioned in the introduction, all of our results concern only
contractible periodic orbits and throughout the paper \emph{a periodic
  orbit is always assumed to be contractible, even if this is not
  explicitly stated}.  We denote the set of $k$-periodic orbits of $H$
by $\PP_k(H)$. The set of all periodic orbits will be denoted by
$\PP(H)$. Finally, we will write $\dot{\PP}_k(H)$ and $\dot{\PP}(H)$
for the collections of \emph{simple} (i.e., not iterated) periodic
orbits of $H$.

A periodic orbit $x$ of $H$ is said to be \emph{non-degenerate} if the
linearized return map $d\varphi_H \colon T_{x(0)}M\to T_{x(0)}M$ has
no eigenvalues equal to one. Following \cite{SZ}, we call $x$
\emph{weakly non-degenerate} if at least one of the eigenvalues is
different from one and \emph{totally degenerate} if all eigenvalues
are equal to one. A Hamiltonian $H$ is (weakly) non-degenerate if all
its one-periodic orbits are (weakly) non-degenerate and $H$ is
\emph{strongly non-degenerate} if all periodic orbits of $H$ (of all
periods) are non-degenerate.

Let $\bar{x}=(x,A)$ be a non-degenerate capped periodic orbit.  The
\emph{Conley--Zehnder index} $\MUCZ(\bar{x})\in\Z$ is defined, up to a
sign, as in \cite{Sa,SZ}. (Sometimes, we will also use the notation
$\MUCZ(x,A)$.)  In this paper, we normalize $\MUCZ$ so that
$\MUCZ(\bar{x})=n$ when $x$ is a non-degenerate maximum (with trivial
capping) of an autonomous Hamiltonian with small Hessian. With this
normalization, the Conley--Zehnder index is the negative of that in
\cite{Sa}. The \emph{mean index} $\hMUCZ(\bx)\in\R$ measures, roughly
speaking, the total angle swept by certain unit eigenvalues of the
linearized flow $d\varphi^t_H\mid_x$ with respect to the
trivialization associated with the capping; see \cite{Lo,SZ}. The mean
index is defined even when $x$ is degenerate and depends continuously
on $H$ and $\bx$ in the obvious sense.  Furthermore,
\begin{equation}
\label{eq:mean-cz}
\big|\hMUCZ(\bx)-\MUCZ(\bx)\big|\leq n
\end{equation}
and the inequality is strict when $x$ is weakly non-degenerate.  The
mean index is homogeneous with respect to iteration:
\begin{equation}
\label{eq:mean-hom}
\hMUCZ\big(\bx^k\big)=k\hMUCZ(\bx).
\end{equation}

\subsection{Floer homology}
In this subsection, we very briefly discuss, mainly to set notation,
the construction of filtered Floer homology. We refer the reader to,
e.g., \cite{FO, GG:gaps, HS, MS, Sa, SZ} for detailed accounts and
additional references. We also recall the definition of the local
Floer homology.

\subsubsection{Filtered Floer homology}
Fix a ground field $\F$. Let $H$ be a non-degenerate Hamiltonian on
$M$. Denote by $\CF^{(-\infty,\, b)}_m(H)$, with
$b\in (-\infty,\,\infty]\setminus\CS(H)$, the vector space of formal
linear combinations
$$ 
\sigma=\sum_{\bar{x}\in \bPP(H)} \alpha_{\bar{x}}\bar{x}, 
$$
where $\alpha_{\bar{x}}\in\F$ and $\MUCZ(\bar{x})=m$ and
$\CA_H(\bar{x})<b$. Furthermore, we require, for every $a\in \R$, the
number of terms in this sum with $\alpha_{\bar{x}}\neq 0$ and
$\CA_H(\bar{x})>a$ to be finite. The graded $\F$-vector space
$\CF^{(-\infty,\, b)}_*(H)$ is endowed with the Floer differential
counting the $L^2$-anti-gradient trajectories of the action
functional. Thus we obtain a filtration of the total Floer complex
$\CF_*(H):=\CF^{(-\infty,\, \infty)}_*(H)$. Furthermore, set
$$
\CF^{(a,\, b)}_*(H):=\CF^{(-\infty,\,
  b)}_*(H)/\CF^{(-\infty,\,a)}_*(H),
$$
where $-\infty\leq a<b\leq\infty$ are not in $\CS(H)$. The resulting
homology, the \emph{filtered Floer homology} of $H$, is denoted by
$\HF^{(a,\, b)}_*(H)$ and by $\HF_*(H)$ when
$(a,\,b)=(-\infty,\,\infty)$.  Working with filtered Floer homology,
\emph{we will always assume that the end points of the action interval
  are not in the action spectrum.} The degree of a class
$w\in \HF^{(a,\, b)}_*(H)$ is denoted by~$|w|$.

Over $\F=\Z_2$, we can view a chain
$\sigma=\sum \alpha_{\bx}\bx\in \CF_*(H)$ as simply a collection of
capped one-periodic orbits $\bx$ for which $\alpha_{\bx}\neq 0$. In
general, we will say that $\bx$ \emph{enters} the chain $\sigma$ when
$\alpha_{\bx}\neq 0$. Note also that it is often convenient to view
$\CF^{(a,\, b)}_*(H)$ as a subspace, but not in general a subcomplex,
of $\CF_*(H)$.

The total Floer complex and homology are modules over the
\emph{Novikov ring} $\Lambda$. In this paper, the latter is defined as
follows. Set
$$
I_\omega(A)=-\omega(A)\text{ and } I_{c_1}(A)=-2\left<c_1(TM),
  A\right>,
$$
where $A\in\pi_2(M)$.  Thus
\begin{equation}
\label{eq:omega-c1}
I_\omega=\frac{\lambda}{2}I_{c_1}
\end{equation}
when $M$ is monotone or negative monotone. 

Let $\Gamma$ be the quotient of $\pi_2(M)$ by the equivalence relation
where the two spheres $A$ and $A'$ are considered to be equivalent if
$I_\omega(A)=I_\omega(A')$ and $I_{c_1}(A)=I_{c_1}(A')$. In other
words,
$$
\Gamma=\frac{\pi_2(M)}{\ker I_\omega\cap \ker I_{c_1}}.
$$
For instance, $\Gamma\simeq \Z$ when $M$ is negative monotone or
monotone with $\lambda\neq 0$. The homomorphisms $I_\omega$ and
$I_{c_1}$ descend to $\Gamma$.

The group $\Gamma$ acts on $\CF_*(H)$ and on $\HF_*(H)$ by recapping:
an element $A\in \Gamma$ acts on a capped one-periodic orbit $\bar{x}$
of $H$ by attaching the sphere $A$ to the original capping. Denoting
the resulting capped orbit by $\bx\# A$, we have
$$
\MUCZ(\bx\# A)=\MUCZ(\bx)+ I_{c_1}(A)
$$
when $x$ is non-degenerate. In a similar vein,
\begin{equation}
\label{eq:delta}
\CA_H(\bx\# A)=\CA_H(\bx)+I_\omega(A)
\text{ and } \hMUCZ(\bx\# A)=\hMUCZ(\bx)+ I_{c_1}(A)
\end{equation}
regardless of whether $x$ is non-degenerate or not.

The Novikov ring $\Lambda$ is a certain completion of the group ring
of $\Gamma$ over $\F$. Namely, $\Lambda$ comprises formal linear
combinations $\sum \alpha_A e^A$, where $\alpha_A\in\F$ and
$A\in \Gamma$, such that for every $a\in \R$ the sum contains only
finitely many terms with $I_\omega(A) > a$ and $\alpha_A\neq 0$. The
Novikov ring $\Lambda$ is graded by setting $|e^A|=I_{c_1}(A)$.  The
action of $\Gamma$ turns $\CF_*(H)$ and $\HF_*(H)$ into
$\Lambda$-modules.

When $M$ is rational, the definition of Floer homology extends to all,
not necessarily non-degenerate, Hamiltonians by continuity.  Namely,
let $H$ be an arbitrary (one-periodic in time) Hamiltonian on $M$ and
let the end-points $a$ and $b$ of the action interval be outside
$\CS(H)$. By definition, we set
\begin{equation}
\label{eq:deg-filt}
\HF^{(a,\, b)}_*(H)=\HF^{(a,\, b)}_*(\tH),
\end{equation}
where $\tH$ is a non-degenerate, small perturbation of $H$. It is easy
to see that the right hand side of \eqref{eq:deg-filt} is independent
of $\tH$ once $\tH$ is sufficiently close to $H$. (The assumption that
$M$ is rational is essential at this point; for, otherwise, the right
hand side of \eqref{eq:deg-filt} depends on the perturbation $\tH$
even when $\tH$ is arbitrarily close to $H$. We refer the reader to
\cite{He:irr} and also to \cite[Remark 2.3]{GG:gaps} for the
definition in the irrational case.)

The total Floer homology is independent of the Hamiltonian and, up to
a shift of the grading and the effect of recapping, is isomorphic to
the homology of $M$. More precisely, we have
$$
\HF_*(H)\cong \H_ {*+n}(M;\F)\otimes \Lambda
$$
as graded $\Lambda$-modules.

\begin{Remark}
\label{rmk:Floer}
In general, in order for the Floer homology to be defined, certain
regularity conditions must be satisfied generically. To ensure this,
one has to either require $M$ to be weakly monotone (see
\cite{HS,MS,Ono:AC,Sa}) or utilize the machinery of virtual cycles
(see \cite{FO,FOOO,LT} or, for the polyfold approach,
\cite{HWZ:SC,HWZ:poly} and references therein). In the latter case,
the ground field $\F$ is required to have zero characteristic. Most of
the proofs in this paper do not rely on the machinery of virtual
cycles. To be more specific, the proof of Theorem \ref{thm:main0}
comprises known and new cases of the Conley conjecture. In the proof
of the new cases (Proposition \ref{prop:omega}), arguing by
contradiction, one can assume that $I_\omega=0$ and hence the Floer
homology can be defined without virtual cycles.
\end{Remark}

\subsubsection{Local Floer homology}
\label{sec:LFH}
The notion of local Floer homology goes back to the original work of
Floer and it has been revisited a number of times since then. Here we
only briefly recall the definition following mainly
\cite{Gi:CC,GG:gaps,GG:gap} where the reader can find a much more
thorough discussion and further references.

Let $\bx=(x,A)$ be a capped isolated one-periodic orbit of a
Hamiltonian $H\colon S^1\times M\to \R$. Pick a sufficiently small
tubular neighborhood $U$ of $x$ and consider a non-degenerate
$C^2$-small perturbation $\tH$ of $H$.  (Strictly speaking $U$ should
be a neighborhood of the graph of the orbit in the extended phase
space $S^1\times M$.)  The orbit $x$ splits into non-degenerate
one-periodic orbits $x_j$ of $\tH$, which are $C^1$-close to $x$. The
capping of $\bx$ gives rise to a capping of $x_j$ and
$\CA_{\tH}(\bx_j)$ is close to $\CA_H(\bx)$.

Every Floer trajectory between the orbits $\bx_j$ is contained in $U$,
provided that $\|\tH-H\|_{C^2}$ is small enough.  Thus, by the
compactness and gluing theorems, every broken anti-gradient trajectory
connecting two such orbits also lies entirely in $U$. Similarly to the
definition of the ordinary Floer homology, consider the complex
$\CF_*(\tH,\bx)$ over $\F$ generated by the capped orbits $\bx_j$,
graded by the Conley--Zehnder index and equipped with the Floer
differential defined in the standard way.  The continuation argument
shows that the homology of this complex is independent of the choice
of $\tH$ and of other auxiliary data (e.g., an almost complex
structure). We refer to the resulting homology group, denoted by
$\HF_*(\bx)$ or $\HF_*(x,A)$, as the \emph{local Floer homology} of
$\bx$. For instance, if $x$ is non-degenerate and $\MUCZ(\bx)=m$, we
have $\HF_l(\bx)=\F$ when $l=m$ and $\HF_l(\bx)=0$ otherwise.

The above construction is local: it requires $H$ to be defined only on
a neighborhood of $x$ and the capping of $x$ is used only to fix a
trivialization of $TM|_x$ and hence an absolute $\Z$-grading of
$\HF_*(\bx)$.

By definition, the \emph{support} of $\HF_*(\bx)$ is the collection of
integers $m$ such that $\HF_m(\bx)\neq 0$. By \eqref{eq:mean-cz} and
continuity of the mean index, $\HF_*(\bx)$ is supported in the
interval $[\hMUCZ(\bx)-n,\, \hMUCZ(\bx)+n]$. Moreover, when $x$ is
weakly non-degenerate, the closed interval can be replaced by an open
interval.

Recall that a capped isolated periodic orbit $\bx$ is called a
\emph{symplectically degenerate maximum (SDM)} if
$\HF_{\hMUCZ(\bx)+n}(\bx)\neq 0$, where we set $\HF_*(\bx)=0$ when
$*\not\in\Z$.  This property is independent of the capping. An SDM
orbit is necessarily totally degenerate and an iteration of an SDM is
again an SDM; see, e.g., \cite{Gi:CC,GG:gap}. By \cite[Thm.\
1.18]{GG:gaps}, a Hamiltonian diffeomorphism of a rational symplectic
manifold with an SDM orbit has infinitely many periodic orbits.

\subsection{The pair-of-pants product}
\label{sec:product}
In this section, we briefly recall several properties of the
pair-of-pants product in Floer homology, referring the reader to,
e.g., \cite{AS,MS,PSS} for more detailed accounts.

The filtered Floer homology carries the so-called \emph{pair-of-pants}
product. On the level of complexes, this product, which we denote by
$*$, is a map
\begin{equation}
\label{eq:prod-c}
\CF^{(-\infty,\, a)}_m(H)\otimes \CF^{(-\infty,\, b)}_l(K)
\to \CF^{(-\infty,\, a+b)}_{m+l-n}(H\nat K)
\end{equation}
giving rise on the level of homology to an associative,
graded-commutative product
\begin{equation}
\label{eq:prod-h}
\HF^{(-\infty,\, a)}_m(H)\otimes \HF^{(-\infty,\, b)}_l(K)
\to \HF^{(-\infty,\, a+b)}_{m+l-n}(H\nat K).
\end{equation}
The product turns the direct sum of the total Floer homology
$$
\bigoplus_{k\geq 0} \HF_*\big(H^{\nat k}\big)
$$
into an associative and graded-commutative unital algebra, where
$\HF_*\big(H^{\nat 0}\big)$ is by definition the quantum homology
$\HQ_*(M)$ of $M$. This direct sum is isomorphic, as an algebra, to
the algebra of polynomials with coefficients in $\HQ_*(M)$. The unit
in this algebra is the fundamental class of $M$ in $\HQ_*(M)$.

On the level of complexes the product \eqref{eq:prod-c} is not
associative in any sense. Furthermore, in order for the product to be
defined the Hamiltonians $H$ and $K$ must meet certain generic
regularity conditions. With this in mind, the product on the level of
homology is defined ``by continuity'' for all Hamiltonians, at least
when $M$ is rational.

Note that the multiplication by the fundamental class
$[M]\in \HF_n(H)$ is a grading-preserving (but not action-preserving)
isomorphism
\begin{equation}
\label{eq:*M}
* [M]\colon \HF_*\big(H^{\nat k}\big)\stackrel{\cong}{\longrightarrow} 
\HF_*\big(H^{\nat (k+1)}\big).
\end{equation}

A word of warning is due regarding the behavior of the action
filtration with respect to the pair-of-pants product. The original
definition of the product as in, e.g., \cite{MS,PSS} relies on cutting
off the Hamiltonians. With this definition the action filtration is
not preserved in the sense of \eqref{eq:prod-c} and
\eqref{eq:prod-h}. Cutting off can be avoided when $H_t$ and $K_t$
vanish for $t$ close to $0\in S^1$, which can always be achieved for
closed manifolds by simply reparametrizing the Hamiltonians. Under
this extra assumption, \eqref{eq:prod-c} and \eqref{eq:prod-h} hold as
stated. However, a more elegant definition of the pair-of-pants
product is given in \cite{AS}, where the domain $\Sigma$ of a
pair-of-pants curve $u$ is treated as a double cover of the cylinder,
branching at one point. The domain $\Sigma$ naturally carries the
``coordinates'' $(s,t)$ lifted from the cylinder, which are true
coordinates on the three open half-cylindrical parts of the domain,
and on each of these parts $u$ satisfies the Floer equation for the
corresponding Hamiltonian $H$ or $K$ or $H\nat K$. For a pair-of-pants
curve $u$ connecting $\bx$ and $\by$ to $\bz$, we have
\begin{equation}
\label{eq:E}
\CA_{H}(\bx)+\CA_{K}(\by)-\CA_{H\nat K}(\bz)=E(u),\textrm{ where } E(u):=\int_\Sigma \big\|\p_s u\big\|^2
\,ds\,dt\geq 0,
\end{equation}
which, in particular, implies \eqref{eq:prod-c} and \eqref{eq:prod-h};
see \cite[Eq.\ (3-18)]{AS}. Note that here the capping of $\bz$ is
obtained by attaching $u$ to the cappings of $\bx$ and $\by$.

Clearly, $\CA_{H}(\bx)+\CA_{K}(\by)=\CA_{H\nat K}(\bz)$ if and only if
$E(u)=0$, and if and only if $u$ is a ``trivial'' pair-of-pants
curve. More specifically, then $\p_s u\equiv 0$ and $x(0)=y(0)$, and
thus $u$ is the projection from $\Sigma$ to the figure-8 curve formed
by $x$ and $y$.

As a consequence of \eqref{eq:E}, there is a variant of the
Lusternik-Schnirelmann inequality in the spirit of \cite[Prop.\
6.2]{GG:gaps} and \cite[Thm.\ 1.1]{GG:convex} for the pair-of-pants
product, which plays an important role in the proof of Theorem
\ref{thm:main0}. For the sake of brevity, we state here only a simple
version of this inequality sufficient for our purposes and concerning
iterated Hamiltonians.

Let $H$ be a one-periodic Hamiltonian such that all its one-periodic
orbits $x_i$ are isolated. Let us fix small disjoint neighborhoods
$U_i$ of these orbits. It is convenient to assume that the orbits
$x_i$ are constant and hence the neighborhoods $U_i$ are simply small
disjoint balls in $M$. Note that the orbits can be made constant by
composing the flow of $H$ with contractible loops in the group of
Hamiltonian diffeomorphisms; see \cite[Sect.\ 5.1]{Gi:CC}. (This is
not necessary and we can take a small neighborhood of the image of
$x_i$ as $U_i$.  However, in this case we also need to require the
images of $x_i$ to be disjoint.) Assume, furthermore, that no $k$- or
$(k+1)$-periodic orbit of $H$, other than $x_i^k$ or $x_i^{k+1}$,
enters any of the neighborhoods $U_i$.

Let $\tH$ and $K$ be small non-degenerate perturbations of $H$ and
$H^{\nat k}$ such that the regularity conditions for the pair-of-pants
product between $\tH$ and $K$ are satisfied.  (Here we think of $K$ as
a $k$-periodic Hamiltonian, and hence $\tH\nat K$ is
$(k+1)$-periodic.)  Let $u$ be a pair-of-pants curve from a capped
one-periodic orbit $\bx$ of $\tH$ and a capped $k$-periodic orbit
$\by$ of $K$ to a capped $(k+1)$-periodic orbit $\bz$ of $\tH\nat K$.

\begin{Proposition}[Lusternik--Schnirelmann inequality for the
  product]
\label{prop:LS}
Assume that $\tH$ and $K$ are sufficiently $C^\infty$-close to $H$
and, respectively, $H^{\nat k}$. Then there exists $\eps>0$, which
depends only on $H$, $k$ and the neighborhoods $U_i$ but not on the
perturbations $\tH$ and $K$, the curve $u$ or the orbits $\bx$, $\by$
and $\bz$, such that
$$
\CA_{\tH}(\bx)+\CA_{K}(\by)-\CA_{\tH\nat K}(\bz)>\eps
$$
unless $u$ is contained entirely in one of the neighborhoods $U_i$.
\end{Proposition}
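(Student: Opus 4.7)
The strategy is a proof by contradiction using Gromov--Floer compactness, very much in the spirit of \cite[Prop.\ 6.2]{GG:gaps} and \cite[Thm.\ 1.1]{GG:convex}. Assume the proposition fails. Then one can find sequences of non-degenerate perturbations $\tH_n\to H$ and $K_n\to H^{\nat k}$ in $C^\infty$, together with pair-of-pants curves $u_n$ from capped orbits $\bx_n$ of $\tH_n$ and $\by_n$ of $K_n$ to $\bz_n$ of $\tH_n\nat K_n$, such that
$$
E(u_n)=\CA_{\tH_n}(\bx_n)+\CA_{K_n}(\by_n)-\CA_{\tH_n\nat K_n}(\bz_n)\to 0,
$$
yet the image $u_n(\Sigma)$ is not contained in any single $U_i$.

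Since $E(u_n)\to 0$, no energy concentration can occur in the limit: there is no sphere bubbling, as a bubble would carry a definite quantum of energy, and no non-trivial Floer breaking, as the energies of broken pieces are non-negative and must sum to zero. Applying standard Gromov--Floer compactness for pair-of-pants curves (cf.\ \cite{MS, AS}), we extract, after passing to a subsequence, a limit $u_\infty$ which is a pair-of-pants solution for $H$ and $H^{\nat k}$ of total energy zero, with three asymptotes $\bx_\infty$, $\by_\infty$, $\bz_\infty$ obtained as $C^\infty$-limits of $\bx_n$, $\by_n$, $\bz_n$.

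Because $E(u_\infty)=0$, the identity \eqref{eq:E} forces $\p_s u_\infty\equiv 0$, so $u_\infty$ is a trivial pair-of-pants curve: $\bx_\infty$ and $\by_\infty$ meet at a common basepoint $x_\infty(0)=y_\infty(0)$, and $\bz_\infty$ is their concatenation. Since the one-periodic orbits of $H$ are the constants $x_i$, we have $x_\infty=x_i$ for some $i$, so $y_\infty(0)=x_i\in U_i$. The hypothesis that the only $k$-periodic (respectively $(k+1)$-periodic) orbit of $H$ entering $U_i$ is $x_i^k$ (respectively $x_i^{k+1}$) then forces $y_\infty=x_i^k$ and $z_\infty=x_i^{k+1}$, and hence $u_\infty$ is the constant map at the point $x_i$, with image contained in $U_i$.

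It remains to conclude that $u_n(\Sigma)\subset U_i$ for $n$ large, contradicting the initial assumption. On any compact subdomain of $\Sigma$, the convergence $u_n\to u_\infty$ is $C^\infty$ by elliptic regularity, and so $u_n$ maps such a subdomain into $U_i$ for $n\gg 1$. On each of the three cylindrical ends of $\Sigma$, $u_n$ is asymptotic to $\bx_n$, $\by_n$, or $\bz_n$, and these orbits are $C^\infty$-close to the respective constants $x_i$, $x_i^k$, $x_i^{k+1}$ sitting inside $U_i$. The main technical point, and where the argument requires the most care, is to upgrade this to uniform $C^0$-control along the non-compact ends. This can be handled either through uniform exponential decay estimates for Floer cylinders toward their non-degenerate asymptotes, or, more robustly in case the spectral gaps of $\bx_n$, $\by_n$, $\bz_n$ degenerate as $n\to\infty$, by a monotonicity-type argument as used in \cite{Gi:CC, GG:gaps}: a curve whose asymptotes lie deep inside $U_i$ but whose image escapes a slightly larger neighborhood must absorb a definite amount of energy, contradicting $E(u_n)\to 0$.
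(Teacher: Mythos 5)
Your argument is correct in substance but is organized differently from the paper's. The paper gives a direct, quantitative proof: if $u$ leaves the neighborhood $U_i$ containing its first asymptote, it must traverse a shell $W$ around $\p U_i$ which, for perturbations close enough to $H$ and $H^{\nat k}$, contains no periodic orbits of the relevant periods; then the estimate of \cite[Sect.\ 1.5]{Sa} or \cite[Lemma 19.8]{FO} (small energy forces $\|\p_s u\|_{L^\infty}$ to be small, hence $\p_t u$ to approximate the Hamiltonian vector field, so that crossing an orbit-free region costs a definite quantum of energy) gives the uniform lower bound on $E(u)$ directly via \eqref{eq:E}. You instead run a contradiction/Gromov--Floer compactness scheme, which is a legitimate alternative and correctly yields the required uniformity of $\eps$ in the perturbations. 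However, note that your compactness step does not actually discharge the analytic difficulty: as you acknowledge, $C^\infty_{loc}$-convergence controls only compact parts of $\Sigma$, and the control on the three ends is exactly where the content lies. Of your two proposed fixes, uniform exponential decay is not viable by itself, since the spectral gaps of the non-degenerate asymptotes of $\tH_n$ and $K_n$ degenerate as $n\to\infty$; and ``monotonicity'' must be understood in the Floer-theoretic sense of the Salamon/Fukaya--Ono estimate rather than the holomorphic-curve sense, because a Floer curve with $E(u)=0$ can still travel arbitrarily far along Hamiltonian flow lines, so escaping a neighborhood is only obstructed by the absence of periodic orbits there, not by area considerations. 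Once that estimate is in hand, it applies to $u_n$ directly and the compactness scaffolding becomes redundant --- which is precisely why the paper dispenses with it. So your proof is acceptable, but the efficient version of it is the paper's one-step argument.
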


\begin{Remark}
  If the image of $u$ is in $U_i$, the orbits $x$, $y$ and $z$ also
  lie in $U_i$. Note also that the condition that the neighborhoods
  $U_i$ are disjoint can be omitted but then the inequality holds when
  $u$ is not contained in a connected component of $\cup U_i$.
\end{Remark}

The proof of the proposition is rather standard and we only spell out
the main idea.

\begin{proof}[Outline of the proof of Proposition \ref{prop:LS}]
  If $u$ does not lie entirely in the neighborhood $U_i$ containing
  $x$, it has to pass through a neighborhood $W$ of the boundary
  $\p U_i=\bar U_i\setminus U_i$.  When the perturbations $\tH$ and
  $K$ are sufficiently close to $H$ and, respectively, $H^{\nat k}$,
  no $k$-periodic orbit of $K$ and $(k+1)$-periodic orbit of
  $\tH\nat K$, other than the orbits which $x^k$ and $x^{k+1}$ split
  into, enters $U_i$. Thus there are no periodic orbits of $H$, $K$
  and $H\nat K$ with periods respectively $1$, $k$ and $k+1$ passing
  through $W$. By \eqref{eq:E}, it is enough to obtain a lower bound
  $\eps>0$ on the energy $E(u)$.  When $E(u)$ is below a certain
  threshold, $\|\p_s u\|_{L^\infty}$ is small and, in fact,
  $\|\p_s u\|_{L^\infty}= o(1)$ as $E(u)\to 0$. Then $\p_t u$ is close
  to the Hamiltonian vector field, since in half-cylindrical parts of
  its domain $\Sigma$ the map $u$ is governed by the corresponding
  Floer equations. Now arguing as in \cite[Sect.\ 1.5]{Sa} or
  \cite[Lemma 19.8]{FO} it is not hard to see that $u$ has to acquire
  a certain amount of energy, \emph{a priori} bounded from below,
  while passing through $W$. As a consequence, we obtain a lower bound
  on $E(u)$.
\end{proof}

\subsection{Spectral invariants and action carriers}
\label{sec:spec}
The theory of Hamiltonian \emph{spectral invariants} was developed in
its present Floer--theoretic form in \cite{Oh,Schwarz}, although the
first versions of the theory go back to \cite{HZ,Vi:gen}. Here we
briefly recall some elements of this theory essential for what
follows, mainly following \cite{GG:gaps}. We refer the reader to
\cite{U2} for a detailed treatment of spectral invariants.

Let $M$ be a closed rational symplectic manifold and let $H$ be a
Hamiltonian on $M$. The \emph{spectral invariant} or \emph{action
  selector} $\s_w$ associated with a class $w\in \HF_*(H)=\HQ_*(M)$
is defined as
$$
\s_w(H)= \inf\{ a\in \R\ssminus \CS(H)\mid w \in \im(i^a)\}
=\inf\{ a\in \R\ssminus \CS(H)\mid j^a\left( w \right)=0\},
$$
where $i^a\colon \HF_*^{(-\infty,\,a)}(H)\to \HF_*(H)$ and
$j^a\colon \HF_*(H)\to\HF_*^{(a,\, \infty)}(H)$ are the natural
``inclusion'' and ``quotient'' maps. It is easy to see that
$\s_w(H)>-\infty$.

The action selector $\s_w$ is a symplectic invariant of $H$ with the
following properties:

\begin{itemize}

             
\item[(AS1)] Continuity: $\s_w$ is Lipschitz in $H$ in the
  $C^0$-topology.

\item[(AS2)] Monotonicity: $\s_w(H)\geq \s_w(K)$ whenever $H\geq K$
  pointwise.

\item[(AS3)] Hamiltonian shift:
$$
\s_w(H+a(t))=\s_w(H)+\int_{S^1}a(t)\,dt,
$$
 where $a\colon S^1\to\R$.

\item[(AS4)] Homotopy invariance: $\s_w(H)=\s_w(K)$ when
  $\varphi_H=\varphi_K$ in the universal covering of the group of
  Hamiltonian diffeomorphisms and $H$ and $K$ have the same mean
  value.

\item[(AS5)] Triangle inequality or sub-additivity:
  $\s_{w_1 * w_2}(H \nat K)\leq\s_{w_1}(H)+\s_{w_2}(K)$.

\item[(AS6)] Spectrality: $\s_w(H)\in \CS(H)$.  More specifically,
  there exists a capped one-periodic orbit $\bx$ of $H$ such that
  $\s_w(H)=\CA_H(\bx)$.

\end{itemize}

The above list of properties of $\s_w$ is far from exhaustive, but it
is more than sufficient for our purposes. Most of these properties are
rather direct consequences of the definition. However, the
sub-additivity, (AS5), relies on \eqref{eq:prod-h}.  It is worth
emphasizing that the rationality assumption plays an important role in
the proofs of the homotopy invariance and spectrality; see
\cite{Oh,Schwarz}. (The latter property also holds in general for
non-degenerate Hamiltonians. This is a non-trivial result; see
\cite{U1}.)

When $H$ is non-degenerate, the action selector can also be evaluated
as
$$
\s_w(H)=\inf_{[\sigma]=w}\CA_H(\sigma),
$$
where we set 
\begin{equation}
\label{eq:cycle-action}
\CA_H(\sigma)=\max\big\{\CA_H(\bx)\,\big|\, \alpha_{\bx} \neq 0\big\}\text{ for }
\sigma=\sum\alpha_{\bx} \bx\in\CF_*(H).
\end{equation}
The infimum here is obviously attained, since $M$ is rational and thus
$\CS(H)$ is closed.  Hence there exists a cycle
$\sigma=\sum\alpha_{\bx} \bx\in\CF_{|w|}(H)$, representing the class $w$,
such that $\s_w(H)=\CA_H(\bx)$ for an orbit $\bx$ entering
$\sigma$. In other words, $\bx$ maximizes the action on $\sigma$ and
the cycle $\sigma$ minimizes the action over all cycles in the
homology class $w$. We call such an orbit $\bx$ a \emph{carrier} of
the action selector. This is a stronger requirement than just that
$\s_w(H)=\CA_H(\bx)$ and $\MUCZ(\bx)=|w|$. When $H$ is possibly
degenerate, a capped one-periodic orbit $\bx$ of $H$ is a carrier of
the action selector if there exists a sequence of $C^2$-small,
non-degenerate perturbations $\tH_i\to H$ such that one of the capped
orbits $\bx$ splits into is a carrier for $\tH_i$. An orbit (without
capping) is said to be a carrier if it turns into one for a suitable
choice of capping.

It is easy to see that a carrier necessarily exists, provided that $M$
is rational.  A carrier is not in general unique, but it becomes
unique when all one-periodic orbits of $H$ have distinct action
values.

As an immediate consequence of the definition of the carrier and
continuity of the action and the mean index, we have
$$
\s_w(H)=\CA_H(\bx)\text{ and } \big|\hMUCZ(\bx)-|w|\big| \leq n,
$$
and the inequality is strict when $x$ is weakly non-degenerate.

Here we will be mainly interested in the spectral invariant associated
with the fundamental class
$w=[M]\in \H_{2n}(M;\F) \subset \HF_*(H)=\HQ_*(M)$ and set for the
sake of brevity
$$
\s(H):=\s_{[M]}(H).
$$
Then $\s(H)=\max H$ when $H$ is autonomous and $C^2$-small, and (AS5)
takes a simpler form:
$$
\s(H \nat
  K)\leq\s(H)+\s(K).
$$
As an immediate consequence of (AS5), we see that the multiplication
map $*[M]$ from \eqref{eq:*M} shifts all spectral invariants by at
most $\s(H)$ upward. A carrier $\bx$ for $\s$ is in some sense
homologically essential. Namely, $\HF_n(\bx)\neq 0$, provided that all
one-periodic orbits of $H$ are isolated; \cite[Lemma 3.2]{GG:nm}. (In
fact, this is true for all spectral invariants $\s_w$, where now
$\HF_{|w|}(\bx)\neq 0$.)

\section{Mean action}
\label{sec:mean}
\subsection{Mean action and the Lusternik--Schnirelmann inequality}
Consider the sequence $\s_k:=\s(H^{\nat k})$. By (AS5), the sequence
$\s_k$ is sub-additive, i.e.,
$$
\s_{k+l}\leq \s_k+\s_l,
$$
and the normalized sequence $\hc_k=\hc_k(H)=\s_k/k$ converges. In
fact, we have a slightly more precise result. Namely,

\begin{equation}
\label{eq:lim}
\hc_\infty:=\lim_{k\to\infty}\hc_k=\inf_k \hc_k.
\end{equation}
This is an easy consequence of the sub-additivity of $\s_k$ and of the
obvious fact that $\hc_\infty>-\infty$; see, e.g., \cite[p.\ 37,
Problem 98]{PS}. In particular,
$$
\hc_k\geq \hc_\infty.
$$
Moreover, under certain natural additional conditions we have a strict
inequality along the lines of Lusternik-Schnirelmann theory as
interpreted in \cite{GG:gaps} and \cite{GG:convex}. This inequality,
Theorem \ref{thm:c-infty0}, plays a key role in the proof of Theorem
\ref{thm:main0}. For the reader's convenience we state the result
again.

\begin{Theorem}[Lusternik--Schnirelmann inequality for mean spectral
  invariants]
\label{thm:c-infty}
Assume that $M$ is rational, all periodic orbits of $H$ are isolated
and none of the orbits is an SDM. Then, for all $k$,
$$
\hc_k>\hc_\infty.
$$
\end{Theorem}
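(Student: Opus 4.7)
The plan is to argue by contradiction. Suppose $\hc_m = \hc_\infty =: c$ for some $m$. Sub-additivity of $\{\s_k\}$ combined with \eqref{eq:lim} forces $\s_{lm} = lmc$ for every $l \geq 1$. I will show that these equalities, together with Proposition \ref{prop:LS}, force the existence of an SDM orbit, contradicting the hypothesis.

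The core step is the following iteration property: for every $l \geq 1$ there exist a carrier $\bar z_m$ of $\s_m$ and a carrier $\bar z_{lm}$ of $\s_{lm}$ with $\bar z_{lm} = \bar z_m^{\#l}$ (the $l$-th iterate with its naturally induced capping). To establish this, fix $l$ and consider the pair-of-pants product $[M]\cdot [M] = [M]$ computed via small non-degenerate perturbations $\tH \approx H^{\nat lm}$, $K \approx H^{\nat m}$ and action-minimizing chains $\sigma_\tH, \sigma_K$ representing the respective fundamental classes. The product cycle $\sigma_\tH * \sigma_K$ represents $[M] \in \HF(\tH \nat K)$ with $\CA(\sigma_\tH * \sigma_K) \leq \CA(\sigma_\tH) + \CA(\sigma_K)$, and as the perturbations shrink both this bound and $\s_{[M]}(\tH \nat K)$ converge to $(l+1)mc$. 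Extracting a maximum-action orbit $\bar w_\tH$ of $\sigma_\tH * \sigma_K$, it arises as the output of some pair-of-pants curve $u$ whose inputs $\bar x_\tH, \bar y_\tH$ enter $\sigma_\tH, \sigma_K$, and \eqref{eq:E} gives
\[
E(u) = \CA(\bar x_\tH) + \CA(\bar y_\tH) - \CA(\bar w_\tH) \longrightarrow 0.
\]
Now invoke Proposition \ref{prop:LS} with $H^{\nat m}$ playing the role of the one-periodic Hamiltonian and $k = l$: isolation of the periodic orbits of $H$ lets us shrink the neighborhoods $U_i$ of its $m$-periodic orbits to exclude all other $lm$- and $(l+1)m$-periodic orbits, so the conclusion applies and $u$ must eventually lie in some single $U_i$. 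Passing to a convergent subsequence as the perturbations shrink, $u$ limits to a trivial pair-of-pants curve, so $\bar w_\tH$ tends to the concatenation of the input orbits, i.e., an iterate of some carrier $\bar z_m$ of $\s_m$.

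To finish, compactness of $M$ and isolation of the $m$-periodic orbits yield only finitely many such orbits; together with rationality of $M$ (which makes the set of cappings with a given action discrete), only finitely many choices of carrier $\bar z_m$ arise from the construction. Passing to a subsequence in $l$, we may assume $\bar z_m$ is fixed. Then \eqref{eq:mean-hom} gives $\hMUCZ(\bar z_m^{\#l}) = l\,\hMUCZ(\bar z_m)$, while the carrier estimate from Section \ref{sec:spec} gives $\hMUCZ(\bar z_{lm}) \in [0, 2n]$; hence $\hMUCZ(\bar z_m) \in [0, 2n/l]$ along the subsequence. Letting $l \to \infty$ forces $\hMUCZ(\bar z_m) = 0$. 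Combined with $\HF_n(\bar z_m) \neq 0$ (carriers are homologically essential, \cite[Lemma 3.2]{GG:nm}), this gives $\HF_{\hMUCZ(\bar z_m) + n}(\bar z_m) \neq 0$, so $\bar z_m$ is an SDM, contradicting the assumption.

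The main technical obstacle is justifying the iteration property: one must verify that the limit output $\bar w_\tH$ is genuinely the iterate of an $m$-periodic orbit rather than an unrelated orbit with coincidentally matching action. Proposition \ref{prop:LS} is the essential ingredient, forcing the pair-of-pants curve to concentrate near a single periodic orbit of $H$ once its energy drops below the threshold $\eps$, so that all three ends lie in the same small neighborhood $U_i$ where the only $(l+1)m$-periodic orbit is the $(l+1)$-st iterate of the orbit in question.
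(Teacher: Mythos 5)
Your overall architecture is the same as the paper's: argue by contradiction from $\hc_m=\hc_\infty$, use the pair-of-pants product together with Proposition \ref{prop:LS} to localize an action-minimizing product cycle near a single periodic orbit, and then play the linear growth of the mean index against the carrier bound $\hMUCZ\in[0,2n]$, with the non-SDM hypothesis supplying the strictly positive increment. However, there is a genuine gap in the central ``iteration property.'' What the product construction actually yields is that a top-action generator of the product cycle for $H^{\nat (l+1)m}$ is the concatenation $\bz_{lm}\#\bz_m$ of the two inputs, with the output capping obtained by gluing the input cappings along the (trivial) curve $u$. To promote this to $\bz_{lm}=\bz_m^{\# l}$ for a \emph{single fixed capped} orbit $\bz_m$ you need (a) an inductive construction in which the $lm$-periodic input at stage $l+1$ is the product cycle from stage $l$ (an arbitrary carrier of $\s_{lm}$ chosen independently need not be an iterate), and (b) control of the cappings of all $l$ one-periodic factors. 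Point (b) fails as stated: the $m$-periodic factors appearing at different stages are carriers of $\s_m$ whose underlying geometric orbit is pinned down by the neighborhood $U_i$, but whose cappings may differ (this happens exactly when $I_\omega$ vanishes on some $B$ with $I_{c_1}(B)\neq 0$ and $N\leq n$, so that several cappings of $x_i$ have the same action and nontrivial local Floer homology in degree $n$). Your subsequence argument fixes only the last factor, not the other $l-1$, so the identity $\hMUCZ(\bz_{lm})=l\,\hMUCZ(\bz_m)$ does not follow and the conclusion $\hMUCZ(\bz_m)=0$ is not reached.

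The repair is exactly the paper's Step 3: instead of equality, prove the lower bound $\hMUCZ(\bz_{lm})\geq l\Delta - O(1)$, where $\Delta>0$ is the minimum of $\hMUCZ(x_i,A)$ over the finitely many capped one-periodic orbits with zero action and $\HF_n(x_i,A)\neq 0$ (positive precisely because none is an SDM), using $I_{c_1}(A_j-A_0)\geq 0$ for each factor's capping $A_j$ relative to the minimizing capping $A_0$. This forces $\MUCZ$ of the degree-$n$ output past $n$ once $l>(2n+2)/\Delta$. Note also that since the product cycle at stage $l$ is action-minimizing only up to an accumulated error, one must fix the range $l\leq k_0$ in advance and then choose the (independent) perturbations small enough relative to $k_0$ and to the threshold $\eps$ of Proposition \ref{prop:LS}; your order of limits can accommodate this, but it needs to be said.
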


We give a self-contained and detailed proof of the theorem in the next
section.

\begin{Remark}
  The underlying principle behind this theorem is that it is very easy
  for the action to go down in the triangle inequality, (AS5), and the
  equality in (AS5) imposes strong restrictions on the behavior of
  periodic orbits. For instance, one can infer from the equality that
  $w_1$ and $w_2$ admit carriers $\bx$ and $\by$ with common initial
  condition, i.e., $x(0)=y(0)$, provided that all one-periodic orbits
  of $H$ and $K$ are isolated. This can be proved similarly to Theorem
  \ref{thm:c-infty} with suitable modifications and, in fact,
  simplifications.

\end{Remark}

\begin{Remark}
  In Theorem \ref{thm:c-infty} the assumption that the periodic orbits
  are isolated is certainly necessary. (For instance, when $H=\const$,
  we have $\hc_k=\hc_\infty$ for all $k$. There are also ``less
  trivial'' examples.) It is also likely that the condition that none
  of the orbits is an SDM is essential, although we do not have an
  example readily showing this.
\end{Remark}

\begin{Remark}
  In general, the proof of Theorem \ref{thm:c-infty} relies on the
  machinery of virtual cycles (and hence, in particular, we must set
  $\F=\Q$), unless the manifold $M$ is assumed to be weakly
  monotone. In this paper we apply Theorem \ref{thm:c-infty} in the
  setting of Proposition \ref{prop:omega} when $I_\omega=0$, and hence
  the latter requirement is automatically satisfied.
\end{Remark}

Finally, denote by $\widehat{\CS}(H)$ the \emph{normalized or mean
  action spectrum} of $H$, i.e., the union of the increasing sequence
of the nested sets $\CS\big(H^{\nat k}\big)/k$:
$$
\widehat{\CS}(H)=\bigcup_k \frac{1}{k} \CS\big(H^{\nat k}\big).
$$
This set arises naturally in the proofs of some of the Conley
conjecture type results; see \cite{GG:gaps}. Clearly, just because
$\hc_k\to \hc_\infty$ and $\hc_k\in\widehat{\CS}(H)$, we have
\begin{equation}
\label{eq:inf}
\hc_\infty(H)\geq \inf \hat{\CS}(H).
\end{equation}
The invariant $\hc_\infty(H)$ is closely related to Calabi
quasi-morphisms; see \cite{EP,McD,Os,U2}. For us, however,
$\hc_\infty$ is of interest because of its role in the proofs of the
main theorems.

\begin{Remark}
  The connection between $\hc_\infty$ and the Calabi invariant, established
  in \cite{EP}, along with \eqref{eq:lim}, \eqref{eq:inf} and Theorem
  \ref{thm:c-infty} seem to suggest that one might be able to reprove
  and extend to higher dimensions recent results of Hutchings from
  \cite{Hu} by using ``conventional'' symplectic topological
  techniques, not relying on the ECH machinery. However, we have not
  been able to do this.
\end{Remark}

\subsection{Proof of Theorem \ref{thm:c-infty}}
The proof of the theorem is carried out in several steps.

\emph{Step 1: Preparation and action stabilization.}  First, note that
it is sufficient to show that $\hc_l\neq \hc_\infty$ for any $l$; for
$\hc_\infty=\inf \hc_l$ by \eqref{eq:lim}. Arguing by contradiction,
assume that $\hc_l=\hc_\infty$ for some $l$. Then, since the sequence
$\s(H^{\nat k})$ is sub-additive by (AS5) and again by \eqref{eq:lim},
we have $\hc_l=\hc_{2l}=\hc_{3l}=\ldots =\hc_\infty$. Replacing $H$ by
$H^{\nat l}$, we obtain a Hamiltonian for which the sequence $\hc_k$
stabilizes in the first term:
$$
\hc_1=\hc_2=\hc_3=\ldots=\hc_\infty.
$$
Furthermore, replacing $H$ by $H-\hc_\infty$, we can also ensure that
$\hc_\infty=0$. To summarize, we now have
\begin{equation}
\label{eq:c=0}
\s\big(H^{\nat k}\big)=0
\end{equation}
for all $k\in \N$.

It will also be convenient to assume that all one-periodic orbits of
$H$ are constant. This can always be achieved by composing the flow
$\varphi_H^t$ with contractible loops in the group of Hamiltonian
diffeomorphisms; see, e.g., \cite[Sect.\ 5.1]{Gi:CC}.

Finally, we will focus on the case where $N<\infty$, i.e.,
$I_{c_1}\neq 0$. For symplectic CY manifolds, the proof is simpler,
but the wording requires superficial modifications.

\emph{Idea of the proof.} With Step 1 completed, before turning to the
actual proof of the theorem, let us outline the idea of the
argument. To this end, we need to make several simplifying
assumptions. Namely, let us assume that $H$ is (strongly)
non-degenerate, all one-periodic orbits have distinct action, and that
the regularity conditions for the pair-of-paints product are satisfied
for the Hamiltonians $H$ and $H^{\nat k}$ for all $k$.  (Note that the
strong non-degeneracy assumption supersedes the condition that none of
the periodic orbits of $H$ is an SDM.) Let $\bx$ be the unique action
carrier for $\s(H)$. We claim that $\bx^k$ is an action carrier for
$\s\big(H^{\nat k}\big)$. This would be obvious by \eqref{eq:c=0} if
we assumed in addition that all $k$-periodic orbits have
distinct actions since then $\bx^k$ would be the only orbit with 
action equal to $\s\big(H^{\nat k}\big)$.

Let 
$$
\Sigma=\bx+\ldots,
$$
be an action minimizing cycle, where here and below the dots stand for
the terms with action less than zero. Set
$$
C_k:=\Sigma^{ * k}. 
$$
The homology class $[C_k]$ is the fundamental class $[M]$.  Clearly,
$\CA_{H^{\nat k}}(C_k)\leq 0$ and, since $\s\big(H^{\nat k}\big)=0$,
the cycle $C_k$ is also action minimizing and
$$
\CA_{H^{\nat k}}(C_k)=0.
$$
Next, it is not hard to see from Proposition \ref{prop:LS} that the
only term in $C_k$ with zero action is $\beta\bx^k$ for some
$\beta\in \F$:
$$
C_k=\beta\bx^k+\ldots.
$$
By \eqref{eq:c=0}, $\beta\neq 0$ and $\bx^k$ is an action
carrier for $\s\big(H^{\nat k}\big)$.

Finally, we have $\MUCZ(\bx)=n$ and hence, since $x$ is
non-degenerate, $\hMUCZ(\bx)>0$ by \eqref{eq:mean-cz}. Therefore, by
\eqref{eq:mean-hom} and again \eqref{eq:mean-cz},
$\MUCZ(\bx^k)\to\infty$ which is impossible because $\bx^k$ is an
action carrier and thus $\MUCZ(\bx^k)=n$.

None of our simplifying assumptions are satisfied in general, and one
difficulty that arises in the proof is that to meet the regularity
conditions for the pair-of-pants product one has to perturb $H$ and
$H^{\nat k}$ independently. Furthermore, even if regularity could be
achieved by a perturbation of $H$ only, we would not be able to
establish the strict inequality $\hc_k(H)>\hc_\infty(H)$ by passing to
the limit over perturbations.

\emph{Step 2: Orbit stabilization.}  Let us fix a large positive
integer $k_0$ to be specified later. Consider a sequence of
non-degenerate perturbations $H_k$, $k=1,\ldots,k_0$, of $H$. More
specifically, we pick a (strongly) non-degenerate perturbation $H_1$
of $H$ and let $H_k$ with $k=2,\ldots, k_0$ be small perturbations of
$H_1$.  In what follows, we will need to repeatedly require $H_1$ to
be close to $H$ and $H_k$ close to $H_1$, and we will not keep track
of how small these perturbations must actually be.

Denote by $\{\bx_1,\ldots, \bx_r\}$ the collection of all capped
one-periodic orbits of $H$ with $\CA_H(\bx_i)=0$ and
$0\leq \hMUCZ(\bx_i)\leq 2n$.  (Note that when
$\omega\!\mid_{\pi_2(M)}=0$ an orbit $x_i$ can enter this list several
times. Later on we will discard the capped orbits with
$\HF_n(\bx_i)=0$ from this list but at the moment it is convenient to
consider all orbits with zero action.)  Under the perturbation $H_k$
each of the orbits $x_i$ breaks down into non-degenerate orbits
$x_{ij}(k)$.  When $H_k$ is close to $H_1$, each of the orbits
$x_{ij}(k)$ is a small perturbation of $x_{ij}(1)$. In particular,
there is a one-to-one correspondence between the orbits $x_{ij}(1)$
and $x_{ij}(k)$. Hence, in what follows, we denote these orbits by
$x_{ij}$ suppressing $k$ in the notation. This one-to-one
correspondence extends to capped orbits in a natural way with orbits
inheriting the capping from $\bx_i$. Then
$\MUCZ(\bar{x}_{ij}(k))=\MUCZ(\bar{x}_{ij}(1))$. Thus we can just use
the notation $\MUCZ(\bar{x}_{ij})$. Furthermore, $\CA_{H_k}(\bx_{ij})$
is close to $\CA_H(\bx_i)$.

More generally, when all $H_k$ are close to $H_1$, there is a natural
isomorphism between the Floer complexes $\CF_*(H_k)$ for a suitable
choice of an almost complex structure. With this in mind, we can
identify the complexes $\CF_*(H_k)$ with one complex which we denote
by $\CF_*(H_1)$. In particular, $\bar{x}_{ij}$ is the image of
$\bar{x}_{ij}(k)$ under this identification. Note, however, that this
identification preserves the action filtration only up to an error
bounded by the Hofer distance between the Hamiltonians.

Recall that the orbits $x_i$ are assumed to be constant. Fix small
neighborhoods $U_i$ of $x_i$ such that these neighborhoods are
disjoint from each other and from other periodic orbits of $H$ of
period up to $k_0$. Let $\eps_0$ be the minimum of $\eps$ from
Proposition \ref{prop:LS} as $k$ ranges from 1 to $k_0-1$.  Set
$$
\eta=\min\{\delta,\eps_0/2\},
$$
where $\delta$ is half of the distance from $0$ to other points of
$\CS(H)$. In other words,
$$
\delta=\frac{1}{2}\inf\big\{|a| \,\big|\, 0\neq a\in \CS(H)\big\}.
$$

When $H_k$ is sufficiently close to $H$, we have, using the notation
from \eqref{eq:cycle-action},
\begin{equation}
\label{eq:close}
\big|\s(H_k)\big|<\eta\textrm{ and } \big|\CA_{H_k}(\bx_{ij})\big|<\eta,
\end{equation}
and the orbits $\bx_{ij}$ are the only orbits of $H_k$ with action in
the interval $[-\delta,\,\delta]$.

Pick a cycle $\Sigma_k \in \CF_n(H_k)$ over $\F$ representing the
fundamental class $[M]$ and such that
$$
\CA_{H_k}(\Sigma_k)=\s(H_k).
$$
We write
\begin{equation}
\label{eq:Sigma1}
\Sigma_k=\sum \alpha_{ij}\bar{x}_{ij}+\dots,
\end{equation}
where henceforth the dots stand for the terms with action less than
$-\eta$ and in fact less than $-\delta$. The sum extends only over the
capped orbits with index $n$. Note that no orbits with action greater
than $-\eta$ other than $x_{ij}$ can enter $\Sigma_k$. (Otherwise, we
would have $\s(H_k)> \eta$ since $\Sigma_k$ is an action minimizing
cycle.)  When the Hamiltonians $H_k$ are close to $H_1$, these cycles
can be chosen so that the coefficients $\alpha_{ij}$ are independent
of $k$. In other words, we can think of $\Sigma_k$ as one cycle
$\Sigma$ in the complex $\CF_*(H_1)$ obtained by identifying the
complexes $\CF_*(H_k)$.

The capping of the orbit $\bx_{ij}$ comes from the capping of
$\bx_i$. Thus the same orbit $x_{ij}$ may enter \eqref{eq:Sigma1}
several times with different cappings. (This can happen when
$\omega\!\mid_{\pi_2(M)}=0$ and $N\leq n$, and, in particular, $x_i$
can contribute to the list of capped orbits with action zero more than
once.)  To account for this, it is convenient to rewrite
\eqref{eq:Sigma1} as the sum
\begin{equation}
\label{eq:Sigma2}
\Sigma_k=\sum_{i,A}\sum_j \alpha_{ij,A}\cdot({x}_{ij},A)+\dots,
\end{equation}
where $A$ runs over all cappings of $x_{ij}$ or equivalently of $x_i$
and we have set $\alpha_{ij,A}=0$ when $\MUCZ({x}_{ij},A)\neq n$. For
a fixed $i$ we can have only a finite number of different cappings $A$
occurring in this sum with $\alpha_{ij,A}\neq 0$.

Before proceeding with the argument, we need to ``trim'' the cycle
$\Sigma_k$ to guarantee that it involves only homologically essential
orbits $x_i$. Consider the (unordered) collection
\begin{equation}
\label{eq:vec-alpha}
\vec{\alpha}_{i,A}=\{
\alpha_{ij,A}\}.
\end{equation}
We say that $\vec{\alpha}_{i,A}=0$ when $\alpha_{ij,A}=0$ for all $j$.

\begin{Lemma}
\label{lemma:trim} The cycles $\Sigma_k$ can be chosen so that
\begin{equation}
\label{eq:local}
\vec{\alpha}_{i,A}\neq 0\quad \Longrightarrow\quad
\HF_n(x_i,A)\neq 0.
\end{equation}
In other words, constructing the cycle $\Sigma_k$ we can discard from
the collection of the orbits $\bx_i=(x_i,A)$ the orbits with
$\HF_n(\bx_i)=0$.
\end{Lemma}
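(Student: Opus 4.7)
The plan is to trim $\Sigma_k$ by subtracting local boundaries supported near each capped orbit $(x_i,A)$ for which $\HF_n(x_i,A)=0$. The starting point is to reorganize the near-zero action part of $\Sigma_k$ as $\Sigma_k=\sum_{(i,A)} c_{i,A}+R$, where
\[
c_{i,A}\;:=\;\sum_{j}\alpha_{ij,A}\cdot (x_{ij},A)
\]
collects the local contribution of a fixed capped orbit and $R$ gathers the terms of action below $-\delta$; by the last sentence preceding the lemma, no other orbits of $H_k$ have action in $(-\delta,\,\delta)$.

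The crucial technical point is that, for sufficiently small perturbations $H_k$, each $c_{i,A}$ is a cycle in the \emph{local} Floer complex computing $\HF_*(x_i,A)$. Two standard facts lie behind this. First, any Floer trajectory of $H_k$ that stays inside $U_i$ preserves the label $(i,A)$ and therefore contributes only to the local differential $\partial_{\mathit{loc}}$. Second, any trajectory that leaves $U_i$ carries a uniform lower bound on its energy depending only on $H$ and on the neighborhoods $U_i$; once $\|H_k-H\|$ is taken small enough that this bound exceeds $2\eta$, such a nonlocal trajectory must terminate on a generator of action below $-\delta$. Consequently $\partial c_{i,A}=\partial_{\mathit{loc}} c_{i,A}+(\text{action}<-\delta)$, while $\partial R$ lies below action $-\delta$ because the Floer differential decreases the action. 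The identity $\partial\Sigma_k=0$ then kills the local parts, and since the local complexes attached to distinct $(i,A)$ are supported on disjoint generators, each $\partial_{\mathit{loc}} c_{i,A}$ vanishes individually.

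Once this is established, the conclusion is routine. For every offending pair $(i,A)$ with $\HF_n(x_i,A)=0$, the local $n$-cycle $c_{i,A}$ is a local boundary $\partial_{\mathit{loc}}\tau_{i,A}$. Setting
\[
\Sigma_k'\;:=\;\Sigma_k\;-\sum_{(i,A)\text{ offending}}\partial\tau_{i,A}
\]
and decomposing $\partial=\partial_{\mathit{loc}}+(\partial-\partial_{\mathit{loc}})$, the offending $c_{i,A}$ are cancelled while the remaining contribution lies below action $-\delta$ and is absorbed into the dots. Homologically $[\Sigma_k']=[M]$, and the sandwich $\s(H_k)\le \CA_{H_k}(\Sigma_k')\le \CA_{H_k}(\Sigma_k)=\s(H_k)$ certifies that $\Sigma_k'$ remains action-minimizing and satisfies \eqref{eq:local}. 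Because the $\tau_{i,A}$ are constructed from data intrinsic to $H_1$, the same trimming applies uniformly across $k$ under the identifications $\CF_*(H_k)\cong \CF_*(H_1)$ fixed in the setup, preserving the $k$-independence of the coefficients $\alpha_{ij,A}$.

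The main obstacle is the uniform energy lower bound for nonlocal trajectories: one must verify that no Floer trajectory of $H_k$ between two capped orbits of action in $[-\eta,\,\eta]$ but attached to different pairs $(i,A)$ can exist when $H_k-H$ is sufficiently small. This is in the same spirit as Proposition \ref{prop:LS} and is obtained by Gromov-type compactness together with an analysis of the broken limit trajectories for the unperturbed autonomous dynamics near the constant orbits $x_i$.
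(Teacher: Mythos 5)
Your proposal is correct and follows essentially the same route as the paper: project the near-zero-action part of $\Sigma_k$ onto the direct sum of local Floer complexes $\bigoplus_{i,A}\CF_*(x_i,A)$, observe each summand is a local cycle, and for the summands that are null-homologous locally subtract the \emph{total} differential of a local primitive, which kills the offending coefficients while changing only terms of action below $-\delta$ and preserving both the homology class and the action-minimizing property. The only difference is cosmetic — you spell out the energy lower bound making the projection a chain map, which the paper leaves implicit in the decomposition \eqref{eq:sum}.
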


Note that $\vec{\alpha}_{i,A}\neq 0$ for at least one $i$ for any
choice of $\Sigma_k$; for otherwise we would have $\s(H_k)<-\eta$.

\begin{proof} 
  Due to an isomorphism between the complexes $\CF_*(H_k)$, it is
  enough to do this for one value of $k$. Thus we drop $k$ from the
  notation of the cycle $\Sigma_k$. The cycle $\Sigma$ lies in
  $\CF_*^{(-\infty,\eta)}(H_k)$. Hence it can be projected to
  $\CF_*^{(-\eta,\eta)}(H_k)$. When the Hamiltonian $H_k$ is close to
  $H$, we have the direct sum decomposition of complexes
\begin{equation}
\label{eq:sum}
 \CF_*^{(-\eta,\eta)}(H_k)=\bigoplus_{i,A}\CF_*(x_i,A),
\end{equation}
where the complex $\CF_*(x_i,A)$ is spanned by the capped periodic
orbits $(x_{ij},A)$ which $(x_i,A)$ splits into. (Here we are using
the fact that a Floer trajectory which is not entirely contained in
one of the neighborhoods $U_i$ must have energy bounded from below by
some constant independent of the perturbations $H_k$; see \cite[Sect.\
1.5]{Sa} or \cite[Lemma 19.8]{FO}.) The homology of this complex is
the local homology $\HF_*(x_i,A)$. Let $\Sigma_{i,A}$ be the
projection $\Sigma$ to $\CF_*(x_i,A)$.

Clearly, $\HF_n(x_i,A)\neq 0$ when $[\Sigma_{i,A}]\neq 0$ in
$\HF_*(x_i,A)$. Thus it is sufficient to eliminate the entries
$\alpha_{ij,A}$ for all $i$ such that $[\Sigma_{i,A}]=0$. Pick one
such $i$ and let $\beta$ be a primitive of $\Sigma_{i,A}$ in
$\CF_*(x_i,A)$. We can view $\CF_*(x_i,A)$ as a subspace of
$\CF_*(H_k)$ and thus $\beta\in\CF_*(H_k)$. Consider the cycle
$\Sigma'=\Sigma-\p \beta$, where $\p$ is the differential in the total
complex $\CF_*(H_k)$.  This cycle still represents $[M]$ and, as is
easy to see, $\CA_{H_k}(\Sigma')=\s(H_k)$. Furthermore,
$\vec{\alpha}_{i,A}=0$ for $\Sigma'$, while other groups
$\vec{\alpha}_{i',A'}$, where $i'\neq i$ or $A'\neq A$, remain the
same as for $\Sigma$. (However, the ``lower order terms'', i.e., the
terms with action below $-\eta$, can be effected by this change.)
Applying this procedure to all $i$ and $A$ with $[\Sigma_{i,A}]=0$, we
obtain a new cycle, which we still denote by $\Sigma$ or $\Sigma_k$,
satisfying~\eqref{eq:local}.
\end{proof}

Note that $\hMUCZ(x_i,A)> 0$ for every orbit $(x_i,A)$ with
$\HF_n(x_i,A)\neq 0$ and hence, by \eqref{eq:local}, for every orbit
with $\vec{\alpha}_{i,A}\neq 0$.  Indeed, $\hMUCZ(x_i,A)\geq 0$ by
\eqref{eq:mean-cz}. If we had $\hMUCZ(x_i,A)=0$, the orbit $(x_i,A)$
would be an SDM, which would contradict the conditions of the
theorem. Thus $\hMUCZ(x_i,A)> 0$ for all orbits $(x_i,A)$ that enter
the cycle $\Sigma_k$ with $\vec{\alpha}_{i,A}\neq 0$.

Next, consider the collection of capped $k$-periodic orbits of $H$
with zero action. Among these are the iterated orbits $\bx_i^k$ where
$\HF_n(\bx_i)\neq 0$, but in general this collection may include some
other orbits. Moreover, when $I_\omega=0$, an orbit $x_i^k$ may occur
in this collection with a capping different from that of $\bx_i^k$.

Denote by $y_i^{(k)}$ the $k$-periodic orbits of $H$. For a generic
choice of the perturbations $H_k$, the Hamiltonian
$$
F_k=H_1\nat\cdots \nat H_k
$$
is a small perturbation of the non-degenerate Hamiltonian
$H_1^{\nat k}$ and hence of $H^{\nat k}$.  (At this point it is
essential that the range of $k$ is fixed and finite.) Under this
perturbation, the orbits $y_i^{(k)}$ split into non-degenerate orbits
which we denote by $y_{ij}^{(k)}$. For instance, when
$y_i^{(k)}=x_i^k$, these are the orbits $x_{ij}^k$ or, to be more
precise, small perturbations $x_{ij}^{(k)}$ of these orbits, and
perhaps some other orbits. The capping of $y_i^{(k)}$ gives rise to
cappings of the orbits it splits into. In particular, when
$\by_i^{(k)}$ has zero action, we denote the resulting capped orbits
by $\by_{ij}^{(k)}$.  This collection of orbits of $F_k$ includes the
orbits $\bx_{ij}^{(k)}$ when $\bx_i$ has zero action, possibly the
orbits $x_{ij}^{(k)}$ with other cappings, and finally some other
orbits.

When the Hamiltonians $H_k$ are close to $H_1$, there is again a
natural isomorphism between the Floer complexes $\CF_*(F_k)$ and
$\CF_*(H_1^{\nat k})$. Identifying these complexes, we can view the
orbits $\by_{ij}^{(k)}$ as elements of $\CF_*(H_1^{\nat k})$.

Similarly to \eqref{eq:close}, we can ensure that 
$$ 
|\s(F_k)|<\eta\textrm{ and }
\big|\CA_{F_k}\big(\by_{ij}^{(k)}\big)\big|<\eta,
$$
and $\by_{ij}^{(k)}$ are the only orbits of $F_k$ with action in
$[-\delta,\delta]$ by taking $H_k$ close to $H$. (At this point again
it is essential that $k$ is taken within a finite range
$1,\ldots,k_0$.)

Consider now the cycle
$$
C_{k}=\Sigma_1 *\cdots * \Sigma_k\in \CF_*(F_k)
$$
representing the fundamental class $[M]$ in $\HF_*(F_k)$. Since the
pair-of-pants product is not associative on the level of complexes,
the placing of parentheses matters here. Thus, to be more precise, the
cycle $C_k$ is defined inductively by
$$
C_{k+1}=C_{k}*\Sigma_{k+1}
$$
with $C_1=\Sigma_1$. Alternatively, we can view the multiplication on
the right by $\Sigma_{k+1}$ as a map of complexes
$$
\Phi_{k}\colon \CF_*(F_{k})\to\CF_*(F_{k+1})
$$
and
$$
C_{k+1}=\Phi_{k}\circ\ldots\circ \Phi_1(\Sigma_1).
$$

It is easy to see that 
$$
\big|\CA_{F_k}(C_k)\big|<\eta. 
$$
Indeed, $\CA_{H_k}(\Sigma_k)=\s(H_k)$ and
$$
\CA_{F_k}(C_k) \le 
\sum_{j=1}^k \CA_{H_j}(\Sigma_j)
=
\sum_{j=1}^k \s(H_j).
$$
When all $H_k$ are sufficiently close to $H$, the spectral invariants
$\s(H_k)$ are close to $\s(H)=0$. In particular, we can ensure that
$\CA_{F_k}(C_k)<\eta$.  Furthermore, $\CA_{F_k}(C_k)>-\eta$ because
$\s(F_k)>-\eta$. In other words, $C_k\in \CF_n^{(-\infty,\eta)}(F_k)$.

Let us denote by $C_k'$ the natural projection of $C_k$ to the
quotient complex $\CF_n^{(-\eta,\eta)}(F_k)$. In other words, $C_k'$
is the leading term in the expression
\begin{equation}
\label{eq:C_k}
C_k=\sum \beta_{ij}^{k}\by_{ij}^{(k)}+\dots,
\end{equation}
where, as before, the dots stand for the terms with action less than
$-\eta$, and the sum extends only over the capped orbits with index
$n$. Similarly to \eqref{eq:sum}, when the Hamiltonians $H_k$ are
sufficiently close to $H$, the complex $\CF_n^{(-\eta,\eta)}(F_k)$
decomposes into a direct sum of complexes $\CF_*\big(y_i^{(k)}\big)$
formed by the orbits which $y_i^{(k)}$ with all possible cappings
breaks into:
\begin{equation}
\label{eq:sum2}
\CF_*^{(-\eta,\eta)}(F_k)=\bigoplus_{i}\CF_*\big(y_i^{(k)}\big).
\end{equation}
We denote by $C_{k,i}$ the projection of $C_k'$ to
$\CF_*\big(y_i^{(k)}\big)$.

By Proposition \ref{prop:LS} and the choice of $\eta$,
$$
\by_{i'j'}^{(k)}*\bx_{ij}=\ldots\textrm{ when } y_{i'}^{(k)}\neq
x_{i}^k ,
$$
where again the dots denote the terms with action less than $-\eta$.
Hence the operators $\Phi_k$ block-diagonalize with respect to the
decomposition \eqref{eq:sum2}:
$$
\Phi_k=\bigoplus_i \Phi_{k,i}.
$$ 
Morerover, $\Phi_{k,i}=0$ when $y_i^{(k)}$ is not one of the orbits
$x_i^k$. In other words, the leading term $C_k'$ in \eqref{eq:C_k}
involves only the orbits $x_{ij}^{(k)}$ with some cappings.

The key result of this step is the following.

\begin{Lemma}
\label{lemma:carrier-stab}
For some $i$ independent of $k\leq k_0$ we have $C_{k,i}\neq 0$, i.e.,
there exists a sequence of capped orbits $\big(x_{ij}^{(k)},B_k\big)$,
indexed by $k$, with $B_k$ and $j$ possibly depending on $k$, entering
the cycles $C_k$ for $1\leq k\leq k_0$ with non-zero coefficients.
\end{Lemma}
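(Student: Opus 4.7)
The plan is to combine a non-vanishing observation for $C_k'$ with a pigeonhole argument driven by the block-diagonalization of $\Phi_k$. First I would observe that $[C_k]=[M]$ in $\HF_n(F_k)$: each $\Sigma_j$ represents the fundamental class $[M]$, which is the unit of the pair-of-pants product, so $[\Sigma_1]*\cdots *[\Sigma_k]=[M]$ under the canonical identifications $\HF_*(F_k)\cong \HQ_*(M)$. Since $|\s(F_k)|<\eta$, and in particular $\s(F_k)>-\eta$, the class $[M]$ admits a representative of action less than $\eta$ but none of action $\leq -\eta$; equivalently, the image of $[M]$ under the natural map $\HF_n^{(-\infty,\eta)}(F_k)\to \HF_n^{(-\eta,\eta)}(F_k)$---which is exactly $[C_k']$---is non-zero. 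In particular $C_k'\neq 0$ as a chain, so by the splitting \eqref{eq:sum2} at least one component $C_{k,i}$ is non-zero (with $i$ a priori depending on $k$).

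The second ingredient is the block-diagonal structure $\Phi_k=\bigoplus_i \Phi_{k,i}$ established just before the lemma; it delivers the recursion $C_{k+1,i}=\Phi_{k,i}(C_{k,i})$, so once $C_{k,i}$ vanishes as a chain, $C_{k',i}=0$ for all $k'\geq k$. Consequently, for each $i$ in the finite index set labeling the orbits $x_i$ with zero action, either $C_{k,i}\neq 0$ for every $k\leq k_0$ (the desired conclusion for that $i$) or there is a smallest $k(i)\leq k_0$ with $C_{k(i),i}=0$, after which $C_{k,i}$ stays zero.

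To conclude I would argue by contradiction: if no $i$ works uniformly, then every $i$ comes with some $k(i)\leq k_0$, and setting $k^*:=\max_i k(i)\leq k_0$ the persistence above forces $C_{k^*,i}=0$ for all $i$ simultaneously, whence $C_{k^*}'=\sum_i C_{k^*,i}=0$, contradicting the non-vanishing of $C_{k^*}'$ established above. Fixing such a good $i$ and then selecting, for each $k\leq k_0$, a capped orbit $(x_{ij}^{(k)},B_k)$ entering $C_{k,i}$ with non-zero coefficient (with $j$ and $B_k$ allowed to depend on $k$) yields the sequence asserted by the lemma. The main obstacle I expect is making the block-diagonal recursion $C_{k+1,i}=\Phi_{k,i}(C_{k,i})$ watertight at the chain level---i.e., ruling out that contributions from the sub-$(-\eta)$ tail of $C_k$ bleed across blocks into the high-action part of $C_{k+1}$---but this is exactly what the choice $\eta\leq \eps_0/2$, combined with Proposition \ref{prop:LS} and the action estimate \eqref{eq:E}, is designed to guarantee.
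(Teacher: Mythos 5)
Your argument is essentially identical to the paper's: the paper likewise uses the block-diagonalization $\Phi_k=\bigoplus_i\Phi_{k,i}$ to get the recursion $C_{k,i}=\Phi_{k-1,i}(C_{k-1,i})$, observes that vanishing of a block therefore persists under further multiplication, and concludes from $[C_k]=[M]$ (so that $C_{k_0}'\neq 0$, given $\s(F_{k_0})>-\eta$) that some block index $i$ survives for all $k\leq k_0$. Your pigeonhole via $k^*=\max_i k(i)$ is just the contrapositive of this persistence applied at $k=k_0$, so the two proofs coincide.
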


\begin{proof} 
  Since the operators $\Phi_k$ block-diagonalize, we have
$$
C_{k,i}=\Phi_{k-1,i}(C_{k-1,i}).
$$
As a consequence,
$$
C_{k,i}= 0\quad \Longrightarrow\quad
C_{k+1,i}= 0,\,\ldots,\, C_{k_0,i}= 0.
$$
Clearly, $C_k\neq 0$ since $[C_k]=[M]$ and hence $C_{k,i}\neq 0$ for
all $k\leq k_0$ for some $i$ independent of $k$.
\end{proof}

In the setting of Lemma \ref{lemma:carrier-stab}, $C_{1,i}$ is the sum
of the cycles $\Sigma_{i,A}$ for all suitable cappings $A$, and hence
$\Sigma_{i,A}\neq 0$ for some $A$. Then for $\bx_i=(x_i,A)$ we also
have
\begin{equation}
\label{eq:x_i}
\HF_n(\bx_i)\neq 0\textrm{ and }\hMUCZ(\bx_i)>0
\end{equation}
by Lemma \ref{lemma:trim}. Here the second inequality is a consequence
of the first one and the fact that $H$ does not have SDM orbits.

\begin{Remark}
  The role of Lemma \ref{lemma:trim} in this argument is to ensure
  that \eqref{eq:x_i} is satisfied for the orbit $(x_i,A)$. There is a
  different way to do this. First, note that for $i$ as in Lemma
  \ref{lemma:carrier-stab}, some orbit $\bx_{ij}^{(1)}=\bx_{ij}$ is a
  carrier for $H_1$. (In contrast, the orbits $\bx_{ij}^{(k)}$ with
  $k\geq 2$ may fail to be action carriers, although they are carriers
  up to a small error $\eta$. The reason is that the cycle $C_k$ is
  action minimizing only up to $\eta$.) Taking a sequence of
  Hamiltonians $H_1\to H$ and also the Hamiltonians $H_k\to H$,
  choosing the cycles $\Sigma_k$ for them, and applying Lemma
  \ref{lemma:carrier-stab}, we obtain a sequence of carriers
  $\bx_{ij}$ for $H_{1}$. Passing to a subsequence, we can guarantee
  that $i$ and the inherited capping $A$ of $x_i$ are independent of
  $k$. Then $(x_i,A)$ is an action carrier and thus
  $\HF_n(x_i,A)\neq 0$; \cite[Lemma 3.2]{GG:nm}.
\end{Remark}

\emph{Step 3: Index growth.}  First, let us specify the value of
$k_0$.  Consider all capped one-periodic orbits $(x_i,A)$ entering the
cycle $\Sigma$ with $\vec{\alpha}_{i,A}\neq 0$ or more generally all
capped one-periodic orbits with $\HF_n(x_i,A)\neq 0$. As has been
pointed out in Step 2, $\hMUCZ(x_i,A)>0$ for each such
orbit. Therefore, since $\PP_1(H)$ is finite,
$$
\Delta:=\min_{(x_i,A)} \hMUCZ(x_i, A)>0.
$$
We set
\begin{equation}
\label{eq:k0}
k_0:=\left\lceil \frac{2n+2}{\Delta}\right\rceil.
\end{equation}

Next, consider the orbits $x_i$ and $\bx_{ij}^{(1)}$ from Lemma
\ref{lemma:carrier-stab}. For the sake of brevity, set
$$
z:=x_i\textrm{, } z_j:=x_{ij}\textrm{ and
}\bz^{(k)}_j:=\big(x_{ij}^{(k)},B_k\big).
$$
The orbit $z^{(k)}_j$, with capping ignored, is a small perturbation
of $z_j^k$ and we may simply identify these orbits.  Since $z$ is a
constant orbit (see Step 1), we can take the trivial (i.e., constant)
capping of $z$ as a reference.  The orbits $z_j^k$ need not be
constant. However, each of these orbits is contained in a small ball
$U$ centered at $z$ and we can fix a capping contained in $U$ as a
reference capping for $z_j^k$. With this in mind, the collections of
all cappings of $z$ or $z_j^k$ can be identified with the group
$\Gamma=\pi_{2}(M)/\ker I_{c_1}\cap \ker I_\omega$. In particular, we
have a one-to-one correspondence between the cappings of these orbits.

When the Hamiltonians $H_k$ are sufficiently close to $H$, every
pair-of-pants curve connecting $\bz_{j'}^{(1)}$ and
$\bz_{j''}^{(k-1)}$ to $\bz_j^{(k)}$ must be trivial, i.e., contained
in $U$. For, otherwise, we would have
$\CA_{H^{\nat k}}\big(\bz_j^{(k)}\big)<-\eta$ by Proposition
\ref{prop:LS} and the assumption that the perturbations $H_k$ are
sufficiently close to $H$.  Then, as follows from the definition of
the cycle $C_k$,
$$
\bz^{(k)}_j=\big(z_j^k, A_1+\cdots+A_k\big),
$$
where each $A_l$ is one of the cappings $A$ of $z$ with
$\HF_n(z,A_l)\neq 0$. 

Let $A_0$ be the capping of $z$ such that $\Delta_0:=\hMUCZ(z,A_0)$ is
the smallest possible value of $\hMUCZ(z,A)$ when $\HF_n(z,A)\neq
0$. Clearly, $\Delta_0\geq \Delta>0$ and
$$
I_{c_1}(A_l-A_0)\geq 0.
$$
Hence,
$$
n=\MUCZ\big(\bz_j^{(k)}\big)=\MUCZ\big(z_j^k,kA_0\big)+\sum_l
I_{c_1}(A_l-A_0)\geq \MUCZ\big(z_j^k,kA_0\big)
$$
for all $k\leq k_0$. On the other hand, $\hMUCZ(z_j,A)$ can be made
arbitrarily close to $\hMUCZ(z,A)$ uniformly in $A$ by taking the
Hamiltonians $H_k$ close to $H$. Therefore,
$$
\MUCZ\big(z_j^k,kA_0\big) \geq k\Delta_0-n-1 \geq k\Delta-n-1,
$$
where we subtracted $n+1$ rather than $n$ to account for the
discrepancy between $\hMUCZ(z_j,A_0)$ and $\hMUCZ(z,A_0)$.  Setting
$k=k_0$ and using \eqref{eq:k0}, we arrive at a contradiction:
$$
n=\MUCZ\big(\bz_j^{(k)}\big)\geq k_0\Delta-n-1 \geq n+1.
$$
$\hfill\square$

\section{Proof of the main theorem}
\label{sec:pf}
For the reader's convenience we begin with restating the main theorem
(Theorem \ref{thm:main0}) of the paper.

\begin{Theorem}
\label{thm:main2}
Assume that a closed symplectic manifold $M$ admits a Hamiltonian
diffeomorphism $\varphi_H$ with finitely many periodic orbits. Then
there exists $A\in\pi_2(M)$ such that $\omega(A)>0$ and
$\left<c_1(TM),A\right>>0$.
\end{Theorem}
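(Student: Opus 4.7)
My plan is to argue by contradiction, reducing via a short linear-algebra argument on $V:=\pi_2(M)\otimes\R$ to a finite list of structural cases, and then dispatch each case by either invoking a previously known instance of the Conley conjecture or proving a new one via Theorem \ref{thm:c-infty0}. Assume $\varphi_H$ has finitely many periodic orbits and, for contradiction, that no class $A\in\pi_2(M)$ satisfies both $\omega(A)>0$ and $\langle c_1(TM),A\rangle>0$. Viewing $[\omega]$ and $c_1(TM)$ as real linear functionals on $V$, the non-existence of such an $A$, together with the fact that the image of $\pi_2(M)$ in $V$ meets every non-empty open cone, forces the open positive half-spaces of $[\omega]$ and $c_1(TM)$ to have empty intersection in $V$. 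If these functionals were linearly independent, their positive half-spaces would necessarily overlap; hence they are proportional on $V$, and a short sign analysis of the proportionality constant leaves precisely three (possibly overlapping) cases: $c_1(TM)|_{\pi_2(M)}=0$, so $M$ is CY; $\omega|_{\pi_2(M)}=0$, so $M$ is weakly exact; or $[\omega]|_{\pi_2(M)}=\lambda c_1(TM)|_{\pi_2(M)}$ with $\lambda<0$, so $M$ is negative monotone.

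The CY and negative monotone cases are handled by the known Conley conjecture results \cite{GG:gaps,He:irr} and \cite{CGG,GG:nm}, each of which contradicts the finiteness of $\PP(\varphi_H)$. The genuinely new content is the weakly exact case $\omega|_{\pi_2(M)}=0$, which I would isolate as Proposition \ref{prop:omega}. Note that in this setting $M$ is automatically rational and the Floer homology used in Theorem \ref{thm:c-infty0} can be constructed without virtual cycles (Remark \ref{rmk:Floer}).

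To prove the weakly exact case, I would first replace $H$ by $H^{\nat N}$, where $N$ is a common multiple of the periods of the simple periodic orbits, so that every simple periodic orbit of the resulting Hamiltonian, still denoted $H$, is one-periodic; this is legitimate because $\dot{\PP}(H)$ is finite by hypothesis. No orbit of this $H$ can be an SDM, for otherwise \cite[Thm.\ 1.18]{GG:gaps} would furnish infinitely many simple periodic orbits. Theorem \ref{thm:c-infty0} then applies and yields $\hc_k>\hc_\infty$ for every $k$. On the other hand, since $\omega|_{\pi_2(M)}=0$, the action $\CA_H(\bx)$ of any capped orbit depends only on the underlying loop $x$, and by iteration homogeneity every capped $k$-periodic orbit of $H$ has action $k\CA_H(\bx)$ with $x\in\dot{\PP}_1(H)$. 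Combined with the spectrality (AS6), this places $\hc_k=\s(H^{\nat k})/k$ in the finite set $\{\CA_H(x):x\in\dot{\PP}_1(H)\}$. A convergent sequence in a finite subset of $\R$ must eventually be constant, hence $\hc_k=\hc_\infty$ for all large $k$, contradicting the strict inequality from Theorem \ref{thm:c-infty0}.

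The principal obstacle is not in the linear algebra reduction or in the weakly exact rigidity argument above, both of which are essentially bookkeeping, but in Theorem \ref{thm:c-infty0} itself, whose proof rests on the iteration-stable analysis of an action carrier together with an index growth estimate and ultimately on the pair-of-pants Lusternik--Schnirelmann inequality of Proposition \ref{prop:LS}.
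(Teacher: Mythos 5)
Your proposal is correct and follows essentially the same route as the paper: a linear-algebra reduction on $\pi_2(M)\otimes\R$ (using density of the rational points) to the CY, negative monotone, and weakly exact cases, with the first two dispatched by the known Conley conjecture results and the weakly exact case handled exactly as in the paper's Proposition \ref{prop:omega} — finiteness of the mean action spectrum forces $\hc_k$ to stabilize, contradicting the strict Lusternik--Schnirelmann inequality of Theorem \ref{thm:c-infty0} once SDM orbits are excluded via \cite[Thm.\ 1.18]{GG:gaps}. The only cosmetic difference is that you contrapose first and then classify, whereas the paper first shows $I_\omega\neq 0$ and $I_{c_1}\neq 0$ and then splits on whether the kernels coincide; the content is identical.
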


This result is a formal consequence of already known cases of the
Conley conjecture and the following proposition.

\begin{Proposition}
\label{prop:omega}
Assume that a closed symplectic manifold $M$ admits a Hamiltonian
diffeomorphism $\varphi_H$ with finitely many periodic orbits. Then
$\omega|_{\pi_2(M)}\neq 0$, i.e., $I_\omega\neq 0$.
\end{Proposition}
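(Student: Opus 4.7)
The plan is to argue by contradiction using Theorem \ref{thm:c-infty}. Suppose that $I_\omega=0$ and yet $\varphi_H$ has only finitely many (contractible) simple periodic orbits. The first observation is that $M$ is then automatically rational, since $\langle[\omega],\pi_2(M)\rangle=\{0\}$ is trivially discrete, and moreover the action $\CA_H(\bar x)$ of any capped orbit $\bar x=(x,A)$ depends only on $x$ and not on $A$.

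The next step is a reduction to the perfect case. Since $\varphi_H$ has only finitely many simple periodic orbits, the set of their prime periods is bounded; letting $d$ be the least common multiple of these periods, every simple periodic orbit of $\varphi_{H^{\nat d}}=\varphi_H^d$ is one-periodic. Iterating rescales $\hc_\infty$ by the factor $d$, preserves the hypothesis of finitely many simple periodic orbits, and carries SDM orbits to SDM orbits. So, replacing $H$ by $H^{\nat d}$, I may assume $H$ is \emph{perfect}: $\PP_1(H)$ is finite and every $k$-periodic orbit of $H$ has the form $x^k$ for some $x\in\PP_1(H)$.

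The core of the argument is the following observation. By spectrality (AS6), for each $k$ there is a capped $k$-periodic orbit $\bar y_k$ with $\s\big(H^{\nat k}\big)=\CA_{H^{\nat k}}(\bar y_k)$. Since $H$ is perfect, $y_k=x_k^k$ for some $x_k\in\PP_1(H)$, and since $I_\omega=0$ the action is independent of the capping, so
$$
\hc_k=\frac{\s(H^{\nat k})}{k}=\frac{\CA_{H^{\nat k}}(\bar y_k)}{k}=\CA_H(x_k)\in S:=\{\CA_H(x):x\in\PP_1(H)\}.
$$
The set $S$ is finite. Combined with $\hc_k\to\hc_\infty$, this forces $\hc_k=\hc_\infty$ for all sufficiently large $k$. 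On the other hand, by \cite[Thm.\ 1.18]{GG:gaps}, an SDM orbit would produce infinitely many periodic orbits of $\varphi_H$, contradicting the standing hypothesis; so no orbit is an SDM, and all are isolated. Theorem \ref{thm:c-infty} then applies and delivers the strict inequality $\hc_k>\hc_\infty$ for every $k$, giving the desired contradiction.

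Essentially all the technical heavy lifting is packaged into Theorem \ref{thm:c-infty}, so once that is in hand the present proposition reduces to a bookkeeping argument exploiting $I_\omega=0$ to collapse the action into finitely many values. The mildly delicate points are the reduction to a perfect Hamiltonian, which relies on the iteration homogeneity $\s(H^{\nat kd})=\s((H^{\nat d})^{\nat k})$, and the observation that perfection together with $I_\omega=0$ really does force $\hc_k$ into a finite set; everything else follows formally. No serious obstacle is expected beyond invoking the Lusternik--Schnirelmann inequality established in the preceding section.
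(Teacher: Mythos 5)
Your proposal is correct and follows essentially the same route as the paper: assume $I_\omega=0$, note that $M$ is then rational, that no orbit can be an SDM (else infinitely many orbits by \cite[Thm.\ 1.18]{GG:gaps}), that the finiteness of $\dot{\PP}(H)$ together with capping-independence of the action forces $\hc_k$ into a finite set and hence to stabilize, and then contradict Theorem \ref{thm:c-infty}. The preliminary reduction to a perfect Hamiltonian is a harmless extra step; the paper instead observes directly that the (normalized) action spectrum is finite.
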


\begin{proof}[Proof of Theorem \ref{thm:main2}]
  Set $V_\R:=\pi_2(M)\otimes\R$. Clearly, the homomorphisms $I_\omega$
  and $I_{c_1}$ extend to $V_\R$. Since the Conley conjecture is known
  to hold for symplectic Calabi-Yau manifolds (see
  \cite{GG:gaps,He:irr}), we have $I_{c_1}\neq 0$ on $V_\R$ under the
  assumptions of the theorem. Likewise, $I_\omega\neq 0$ by
  Proposition \ref{prop:omega}.

  If $\ker I_{c_1}=\ker I_\omega$ on $V_\R$, the manifold $M$ is
  either negative monotone or strictly positive monotone. The former
  case is ruled out since the Conley conjecture holds for negative
  monotone manifolds as is shown in \cite{GG:nm}. In the latter case,
  there is $A\in\pi_2(M)$ such that $\omega(A)>0$ and
  $\left<c_1(TM),A\right>>0$ and the proof is finished.

  When $\ker I_{c_1}\neq\ker I_\omega$ on $V_\R$, there exists
  $A'\in V_\R$ such that $I_{c_1}(A')<0$ and $I_\omega(A')<0$. The
  space $V_\Q:=\pi_2(M)\otimes \Q$ is dense in $V_\R$ and hence there
  is $A''\in V_\Q$ with similar properties. Finally, $\omega(A)>0$ and
  $\left<c_1(TM),A\right>>0$ where $A=m A''\in \pi_2(M)$ for a
  suitably chosen $m\in \Z$.
\end{proof}

\begin{proof}[Proof of Proposition \ref{prop:omega}]
  Arguing by contradiction, assume that
  $\omega\!\mid_{\pi_2(M)}=0$. Note that, as a consequence, the Floer
  homology of $H$ is defined without virtual cycles and $(M,\omega)$
  is automatically rational with $\lambda_0=\infty$. None of the
  periodic orbits of $H$ is an SDM, for otherwise $H$ would have
  infinitely many periodic orbits. Furthermore, $\CS(H)$ is finite and
  hence the sequence $\hc_k$ necessarily stabilizes since it is
  converging, which is impossible by Theorem \ref{thm:c-infty}.
\end{proof}

\begin{Remark}
  The proof of Proposition \ref{prop:omega} essentially comprises two
  cases: the ``non-degenerate'' case and the ``degenerate'', i.e., SDM
  case. Interestingly, this structure is common to all symplectic
  topological proofs of the Conley conjecture type results, even
  though the arguments in the non-degenerate case are quite
  different. We also note that Theorem \ref{thm:c-infty}, again in
  conjunction with the SDM case, can also be used to prove the Conley
  conjecture for negative monotone symplectic manifolds, thus
  by-passing a reference to the results from \cite{CGG,GG:nm} in the
  proof of Theorem~\ref{thm:main2}.
\end{Remark}

\begin{Remark}
  Theorem \ref{thm:c-infty} is closely related to and can be proved
  using \cite[Prop.\ 5.3]{GG:gap} asserting that the pair-of-pants
  product in the local Floer homology of an isolated non-SDM orbit is
  nilpotent. However, such an argument, when detailed, would not be
  much different from or much simpler than the self-contained proof in
  the previous section. On the other hand, one can directly prove
  Theorem \ref{thm:main2} by establishing Proposition \ref{prop:omega}
  as a consequence of \cite[Prop.\ 5.3]{GG:gap} through purely
  algebraic means. The essence of the argument is that if we had
  $\omega$ aspherical and simultaneously $\dot{\PP}(H)$ finite, the
  algebra $\bigoplus_{k\geq 1}\HF_*\big(H^{\nat k}\big)$ would
  necessarily be nilpotent, which is, of course, impossible by
  \eqref{eq:*M}.
\end{Remark}

\section{Perfect Hamiltonians and generic existence}
\label{sec:further}
The notion of the augmented action and the asymptotic spectral
invariant $\hc_\infty$ naturally arise in the study of Hamiltonians
with finitely many periodic orbits and, more specifically, perfect
Hamiltonians. Recall that a Hamiltonian $H$ is said to be
\emph{perfect} if $\dot{\PP}(H)$ is finite and
$\PP_1(H)=\dot{\PP}(H)$, i.e., when $H$ has only finitely many simple
periodic orbits and every such orbit is one-periodic. The latter
condition is automatically satisfied in all known examples of
Hamiltonians with finitely many periodic orbits. Furthermore, all
known perfect Hamiltonians are non-degenerate. Clearly, a suitable
iteration of a Hamiltonian with finitely many simple periodic orbits
is perfect.

One class of examples of perfect Hamiltonians is given by generic
elements in Hamiltonian circle or torus actions with isolated fixed
points. There are, however, other examples, e.g., pseudo-rotations,
with extremely non-trivial dynamics; see, e.g., \cite{AK,FK}.

To see the connection between perfect Hamiltonians and asymptotic
spectral invariants, note first that the actual value of $\hc_\infty$
is not completely trivial to calculate directly by definition even for
such a Hamiltonian as a quadratic form on $\CP^n$. However, it has a
simple interpretation as the so-called augmented action. Namely,
assume that $M$ is strictly monotone (or negative monotone) with
monotonicity constant $\lambda \neq 0$; see Section
\ref{sec:conv}. For $x\in\PP_1(H)$, the \emph{augmented action} of $x$
is
$$
\tA_H(x)=\CA_H(\bx)-\frac{\lambda}{2}\hMUCZ(\bx),
$$
where on the right hand side $x$ is equipped with an arbitrary
capping. By \eqref{eq:omega-c1}, the left hand side is well defined,
i.e., independent of the capping. Augmented action was introduced in
\cite{GG:gaps} and then applied in the circle of questions related to
the Conley conjecture in \cite{CGG,GG:nc}. Under certain conditions it
behaves similarly to the ordinary action while being
capping--independent. In particular, the augmented action is
homogeneous with respect to iterations.

\begin{Theorem}
\label{thm:aa-c_infty}
Assume that $M$ is strictly positive monotone and $H$ is perfect. Let
$x$ be a one-periodic orbit whose iterations occur infinitely many
times as carriers for $\s\big(H^{\nat k}\big)$. Then
\begin{equation}
\label{eq:aa-c_infty}
\hc_\infty=\tA_H(x).
\end{equation}
\end{Theorem}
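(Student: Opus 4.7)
The plan is to exploit the capping--independence of the augmented action on a monotone manifold, together with the fact that the mean index of an action carrier for the fundamental class is \emph{a priori} bounded in terms of~$n$. The whole sequence $\hc_k$ converges to $\hc_\infty$, so it suffices to show that $\hc_k\to \tA_H(x)$ along the infinite subsequence of iterations on which $x^k$ carries $\s\big(H^{\nat k}\big)$. Fix once and for all an arbitrary capping $\bar{x}$ of $x$, so that $\tA_H(x)=\CA_H(\bx)-\tfrac{\lambda}{2}\hMUCZ(\bx)$.

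For each $k$ in the distinguished subsequence, choose a carrier capping $\bar{y}_k$ of $x^k$, so that $\s\big(H^{\nat k}\big)=\CA_{H^{\nat k}}(\bar{y}_k)$. Any capping of $x^k$ differs from the $k$-th iterate $\bar{x}^k$ by recapping by some element $A_k\in\Gamma$, i.e., $\bar{y}_k=\bar{x}^k\# A_k$. The recapping formulas \eqref{eq:delta} give
\begin{equation*}
\CA_{H^{\nat k}}(\bar{y}_k)=k\CA_H(\bx)+I_\omega(A_k),
\qquad
\hMUCZ(\bar{y}_k)=k\hMUCZ(\bx)+I_{c_1}(A_k).
\end{equation*}
Since $M$ is monotone with constant $\lambda$, the relation \eqref{eq:omega-c1}, namely $I_\omega=\tfrac{\lambda}{2}I_{c_1}$, lets me eliminate $I_\omega(A_k)$:
\begin{equation*}
\CA_{H^{\nat k}}(\bar{y}_k)
=k\CA_H(\bx)+\tfrac{\lambda}{2}\bigl(\hMUCZ(\bar{y}_k)-k\hMUCZ(\bx)\bigr)
=k\,\tA_H(x)+\tfrac{\lambda}{2}\hMUCZ(\bar{y}_k).
\end{equation*}
This is the key identity; it is simply the statement that the augmented action is capping--independent, applied iteration by iteration. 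It also confirms that the right-hand side of \eqref{eq:aa-c_infty} does not depend on the choice of capping $\bx$.

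To finish, I need boundedness of $\hMUCZ(\bar{y}_k)$ independent of $k$. This is exactly the content of the index bound recorded after the definition of a carrier: since $\bar{y}_k$ is a carrier for $\s=\s_{[M]}$ and $|[M]|=n$, the inequality \eqref{eq:mean-cz} applied at the perturbation level and continuity of the mean index yield
\begin{equation*}
\bigl|\hMUCZ(\bar{y}_k)-n\bigr|\leq n,\qquad\text{hence } \hMUCZ(\bar{y}_k)\in[0,2n].
\end{equation*}
Dividing the preceding identity by $k$,
\begin{equation*}
\hc_k=\frac{\s\big(H^{\nat k}\big)}{k}=\tA_H(x)+\frac{\lambda}{2k}\hMUCZ(\bar{y}_k),
\end{equation*}
and the last term tends to $0$ along the subsequence. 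Since $\hc_k\to\hc_\infty$, we conclude $\hc_\infty=\tA_H(x)$.

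The main point of potential concern is ensuring that the carrier $\bar{y}_k$ really does satisfy the mean index bound in the possibly degenerate case, but this is already built into the definition of an action carrier via the limiting argument over non-degenerate perturbations and the continuity of $\hMUCZ$. No further input about the dynamics of a perfect Hamiltonian is needed in this proof beyond the hypothesis that iterations of a single orbit $x$ serve as carriers infinitely often.
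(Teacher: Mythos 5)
Your proposal is correct and follows essentially the same route as the paper: both arguments rest on the capping--independence and homogeneity of the augmented action (which you unwind explicitly via the recapping formulas \eqref{eq:delta} and \eqref{eq:omega-c1}, while the paper phrases it as $\big|\tA_{H^{\nat k}}(x_k)-\CA_{H^{\nat k}}(\bx_k)\big|\leq\lambda n$) together with the carrier mean-index bound $0\leq\hMUCZ(\bar{y}_k)\leq 2n$, followed by dividing by $k$ and passing to the limit along the distinguished subsequence. The only cosmetic difference is that the paper deduces the index bound from $\HF_n(\bx_k)\neq 0$, whereas you cite the inequality $\big|\hMUCZ(\bx)-|w|\big|\leq n$ recorded directly after the definition of a carrier; both are available in the text and equivalent for this purpose.
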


A similar result holds when $M$ is negative monotone, but then the
assertion is void; for negative monotone symplectic manifolds admit no
Hamiltonians with finitely many periodic orbits; \cite{GG:nm}.  Note
also that an orbit $x$ satisfying the hypothesis of Theorem
\ref{thm:aa-c_infty} always exists since $\PP_1(H)=\dPP(H)$ is finite.
The theorem has an analog for Reeb flows; see \cite[Sect.\
6.1.2]{GG:convex}.

\begin{proof}
  Let $\bx_k$ be a carrier for $\s(H^{\nat k})$. To prove
  \eqref{eq:aa-c_infty}, it is enough to show that
$$
\hc_\infty=\lim_{k\to\infty} \frac{1}{k}\tA_{H^{\nat k}}(x_k)
$$
since the augmented action is homogeneous. This readily follows from
that
$$
\big|\tA_{H^{\nat k}}(x_k)-\CA_{H^{\nat k}}(\bx_k)\big|\leq
\frac{\lambda}{2}\cdot 2n=\lambda n,
$$
which is, in turn, an immediate consequence of the fact that
$\HF_n(\bx_k)\neq 0$, and hence $0\leq \hMUCZ(\bx_k)\leq 2n$.
\end{proof}

\begin{Example}
\label{ex:CPn}
Consider a quadratic Hamiltonian
$H(z)=\pi\big(\lambda_0|z_0|^2+\cdots+\lambda_n|z_n|^2\big)$ on
$\CP^n$, where the coefficients $\lambda_0,\ldots,\lambda_n$ are
linearly independent over $\Q$. (Here, we identify $\CP^n$ with the
quotient of the unit sphere in $\C^{n+1}$ and hence $\sum|z_i|^2=1$.)
The Hamiltonian $H$ is perfect and has exactly $n+1$ fixed points, the
coordinate axes. A simple calculation shows that their augmented
actions are equal to $\pi\sum \lambda_i/(n+1)$, cf.\ \cite[Exam.\
1.2]{CGG}. Hence $\hc_\infty=\pi\sum \lambda_i/(n+1)$.
\end{Example}

Now we are in a position to prove Theorem \ref{thm:gap-bound0} showing
that the gap between $\s_{[M]}\big(H^{\nat k}\big)$ and
$\s_{[\pt]}\big(H^{\nat k}\big)$, where $[\pt]\in \HF_{-n}(H)$ is the
homology class of a point, is eventually \emph{a priori} bounded for
perfect Hamiltonians. Let us state the theorem again.

\begin{Theorem}[Action Gap]
\label{thm:gap-bound}
Assume that $H$ is perfect and $M$ is positive monotone with
monotonicity constant $\lambda>0$. Then
\begin{equation}
\label{eq:gap-bound}
\s_{[M]}\big(H^{\nat k}\big)-\s_{[\pt]}\big(H^{\nat k}\big)\leq 2\lambda n,
\end{equation}
for all but possibly a finite number of iterations $k\in\N$.
\end{Theorem}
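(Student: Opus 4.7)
The plan is to reduce the gap bound to an equality of augmented actions, exploiting that $\dPP(H)$ is finite and applying Theorem \ref{thm:aa-c_infty} to both $\s_{[M]}$ and $\s_{[\pt]}$. First I would fix action carriers $\bx_k$ and $\by_k$ for $\s_{[M]}(H^{\nat k})$ and $\s_{[\pt]}(H^{\nat k})$, respectively, writing $x_k = x_{i(k)}^k$ and $y_k = x_{j(k)}^k$ for indices $i(k), j(k)$ ranging over the finite set $\{1,\ldots,m\}$ enumerating $\dPP(H)$. By \cite[Lemma 3.2]{GG:nm}, $\HF_n(\bx_k)\neq 0$ and $\HF_{-n}(\by_k)\neq 0$, which via \eqref{eq:mean-cz} forces $\hMUCZ(\bx_k)\in[0,2n]$ and $\hMUCZ(\by_k)\in[-2n,0]$. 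Combining the defining identity $\CA_{H^{\nat k}}(\bz) = \tA_{H^{\nat k}}(z) + \tfrac{\lambda}{2}\hMUCZ(\bz)$, which uses \eqref{eq:omega-c1} and the capping-independence of $\tA$, with the homogeneity $\tA_{H^{\nat k}}(x^k) = k\,\tA_H(x)$ yields
\[
k\,\tA_H(x_{i(k)}) \leq \s_{[M]}(H^{\nat k}) \leq k\,\tA_H(x_{i(k)}) + \lambda n,
\]
\[
k\,\tA_H(x_{j(k)}) - \lambda n \leq \s_{[\pt]}(H^{\nat k}) \leq k\,\tA_H(x_{j(k)}),
\]
and consequently
\[
\s_{[M]}(H^{\nat k}) - \s_{[\pt]}(H^{\nat k}) \leq k\bigl(\tA_H(x_{i(k)}) - \tA_H(x_{j(k)})\bigr) + 2\lambda n.
\]
The theorem would follow once I show $\tA_H(x_{i(k)}) \leq \tA_H(x_{j(k)})$ for all but finitely many $k$.

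Next I would pin both augmented actions to $\hc_\infty(H)$. For the $[M]$-side, Theorem \ref{thm:aa-c_infty} gives $\tA_H(x_{i(k)}) = \hc_\infty$ whenever $i(k)$ takes a value occurring infinitely often in the finite-valued sequence $\{i(k)\}$; hence $\tA_H(x_{i(k)}) = \hc_\infty$ for all but finitely many $k$. For the $[\pt]$-side, I would use (AS5) with $[M]*[\pt] = [\pt]$ to obtain
\[
\s_{[\pt]}(H^{\nat(k+l)}) \leq \s_{[M]}(H^{\nat k}) + \s_{[\pt]}(H^{\nat l}),
\]
from which a Fekete-type argument, dividing by $k+l$ and letting $k\to\infty$ with $l$ fixed, yields $\limsup_k \s_{[\pt]}(H^{\nat k})/k \leq \hc_\infty$. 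Comparing this with $\s_{[\pt]}(H^{\nat k})/k \geq \tA_H(x_{j(k)}) - \lambda n/k$ forces $\tA_H(x_j) \leq \hc_\infty$ for every $j$ occurring infinitely often in $\{j(k)\}$.

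The principal obstacle will be the matching reverse inequality $\tA_H(x_{j(k)}) \geq \hc_\infty$ for $k$ large, equivalently $\liminf_k \s_{[\pt]}(H^{\nat k})/k \geq \hc_\infty$. I would address this via Poincar\'e duality for spectral invariants, $\s_{[M]}(G) = -\s_{[\pt]}(\bar G)$ after normalization, where $\bar G$ generates the inverse Hamiltonian flow. Applied to $G = H^{\nat k}$ and combined with an analog of Theorem \ref{thm:aa-c_infty} for the perfect Hamiltonian $\bar H$, together with the identity $\tA_{\bar H}(\bar x_i) = -\tA_H(x_i)$ under orbit reversal, this identifies $\lim_k \s_{[\pt]}(H^{\nat k})/k = -\hc_\infty(\bar H) = \hc_\infty(H)$. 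The delicate points are the normalization bookkeeping in the duality and verifying that $\bar H$ also satisfies the SDM-free hypothesis; a viable alternative is a symmetric Lusternik--Schnirelmann argument for $\s_{[\pt]}$ mirroring the treatment of $\s_{[M]}$ in Section \ref{sec:mean}. Once both $\tA_H(x_{i(k)}) = \tA_H(x_{j(k)}) = \hc_\infty$ are established for $k$ large, the displayed inequality immediately gives $\s_{[M]}(H^{\nat k}) - \s_{[\pt]}(H^{\nat k}) \leq 2\lambda n$ for all but finitely many $k$.
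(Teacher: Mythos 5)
Your reduction is sound and follows the same architecture as the paper's proof: extract carriers of the form $x_{i(k)}^k$ and $x_{j(k)}^k$ from the finite set $\dPP(H)$, use $\CA=\tA+\tfrac{\lambda}{2}\hMUCZ$ together with homogeneity of the augmented action and the mean--index bounds $\hMUCZ(\bx_k)\in[0,2n]$, $\hMUCZ(\by_k)\in[-2n,0]$ (coming from $\HF_{\pm n}\neq 0$ for carriers), and thereby reduce \eqref{eq:gap-bound} to the resonance $\tA_H(x_{i(k)})=\tA_H(x_{j(k)})$, i.e.\ to $\hc_\infty^-=\hc_\infty^+$. The $[M]$--side identification $\tA_H(x_{i(k)})=\hc_\infty$ via Theorem \ref{thm:aa-c_infty} and the pigeonhole over the finite index set is correct, as is the easy inequality $\hc_\infty^-\leq\hc_\infty^+$.

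The genuine gap is your argument for the reverse inequality. Poincar\'e duality plus Theorem \ref{thm:aa-c_infty} applied to $\bar H=H^{-1}$ yields only $\hc_\infty^-(H)=-\hc_\infty^+(\bar H)=-\tA_{\bar H}(\by_{-1})=\tA_H(y)$ for \emph{some} one-periodic orbit $y$ of $H$, namely the one whose reversed iterates carry $\s_{[M]}\big(\bar H^{\nat k}\big)$. The identity $-\hc_\infty(\bar H)=\hc_\infty(H)$ that you then invoke is precisely the assertion $\tA_H(y)=\tA_H(x)$, i.e.\ the resonance relation you are trying to establish; at this stage there is no a priori relation between the augmented actions of the two (generally distinct) orbits $x$ and $y$ --- Corollary \ref{cor:res1} is a \emph{consequence} of this theorem, not an input. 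Your fallback (a Lusternik--Schnirelmann argument for $\s_{[\pt]}$ mirroring Section \ref{sec:mean}) would at best compare $\hc_k^-$ with $\hc_\infty^-$ and again says nothing about $\hc_\infty^-$ versus $\hc_\infty^+$. What the paper does at this point is a bootstrap: it substitutes \emph{both} augmented--action identities into the gap to obtain \eqref{eq:ss}, expressing $\s_k^+-\s_k^-$ as $(\hc_\infty^+-\hc_\infty^-)$ plus the mean--index difference $\hMUCZ(\bx_k)-\hMUCZ(\by_k)\in[0,4n]$, combines this with the Hofer--norm bound $\hc_\infty^+-\hc_\infty^-\leq\|H\|$ to get a bound on the gap uniform in $k$, divides by $k$ to force $\hc_\infty^+=\hc_\infty^-$, and only then returns to \eqref{eq:ss} to read off the sharp bound. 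Some version of this extra step --- deducing a uniform-in-$k$ bound on the gap \emph{before} concluding $\hc_\infty^+=\hc_\infty^-$ --- is the missing idea in your proposal.
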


In this theorem and in the rest of the section, the choice of the
coefficient ring $\F$ is immaterial and the ring is suppressed in the
notation. In what follows, for the sake of brevity, we also set
$$
\s^+_k:=\s_{[M]}\big(H^{\nat k}\big)\textrm{ and }
\s_k^-:=\s_{[\pt]}\big(H^{\nat k}\big)
$$
and
$$
\hc_\infty^\pm=\lim_{k\to\infty}\s_k^\pm/k.
$$
Thus, in particular, $\s^+_k=\s_k$ and $\hc^+_k=\hc_k$ in the notation
of the previous sections. The difference $\s^+-\s^-$ has played a
prominent role in some aspects of symplectic topology and is sometimes
referred to as the $\gamma$-norm or $\gamma$-metric. It was introduced
in \cite{Vi:gen, HZ} and further studied in, e.g., \cite{Oh,
  Schwarz}. It is easy to see from (AS5) in Section \ref{sec:spec}
that
$$
\s^+_k\geq \s^-_k
$$
and, moreover, the inequality is strict unless $\varphi_H=\id$. The
latter assertion is non-trivial and usually proved by comparing
$\s^+-\s^-$ with the capacity of a small displaced ball; see, e.g.,
\cite{Schwarz}. (One can also argue, when $M$ is monotone, as in the
proof of \cite[Prop.\ 6.2]{GG:gaps}.)

Upper bounds on the difference between spectral invariants as in
\eqref{eq:gap-bound} usually result from non-vanishing of certain GW
invariants or relations in the quantum product; see, e.g., \cite{EP,
  GG:gaps}. Thus Theorem \ref{thm:gap-bound} provides further evidence
supporting the Chance--McDuff conjecture discussed in the
introduction.

\begin{proof}[Proof of Theorem \ref{thm:gap-bound}]  
  By rescaling $\omega$, we may assume that $\lambda=2$, i.e.,
  $I_\omega=I_{c_1}$, and then \eqref{eq:gap-bound} turns into
\begin{equation}
\label{eq:gap-bound2}
\s_k^+-\s_k^-\leq 4 n.
\end{equation} Furthermore, it is enough to show that every 
sequence $k_i\to\infty$ contains an infinite subsequence for which
\eqref{eq:gap-bound2} holds.

Thus let $k_i\to\infty$. Since $H$ is perfect, there is a one-periodic
orbit $x$ and an infinite subsequence, which we still denote by $k_i$,
such that the iterations $x^{k_i}$ with some cappings are action
carriers for $\s^+_{k_i}$. Next, in a similar vein, there is a
one-periodic orbit $y$ and again an infinite subsequence $k_{i_j}$,
such that the iterations $y^{k_{i_j}}$ with some cappings are action
carriers for $\s^-_{k_{i_j}}$. Let us denote the sequence $k_{i_j}$ by
$k_i$ again, write $k=k_i$, and let $\bx_{k}$ and, respectively,
$\by_{k}$ be the resulting capped periodic orbits.

We have
$$
\s_k^+-\s_k^-=\CA_{H^{\nat k}}(\bx_k)-\CA_{H^{\nat k}}(\by_k).
$$
By Theorem \ref{thm:aa-c_infty},
$$
\CA_{H^{\nat k}}(\bx_k)=\tA_{H^{\nat
    k}}(\bx_k)+\hMUCZ(\bx_k)=\hc_\infty^+ + \hMUCZ(\bx_k).
$$
To evaluate the second term, recall that by Poincar\'e duality
$$
\s_{[\pt]}(H)=-\s_{[M]}\big(H^{-1}\big),
$$
where $H^{-1}$ is the Hamiltonian generating the inverse
time-dependent flow $\varphi_H^{-1}\varphi_H^{1-t}$. Therefore,
$$
\hc_\infty^+\big(H^{-1}\big)=-\hc^-_\infty(H)=-\hc^-_\infty.
$$
Furthermore, let us denote by $\by_{-k}$ the orbit $y_k$ traversed in
the opposite direction and equipped with the capping obtained from
$\by_k$ by reversing the orientation. Then, setting
$H^{-\nat k}:=\big(H^{-1}\big)^{\nat k}$, we have
\begin{align*}
\CA_{H^{\nat k}}(\by_k) &= -\CA_{H^{-\nat k}}(\by_{-k})\\
&= -\big[\tA_{H^{-\nat k}}(\by_{-k})+\hMUCZ(\by_{-k})\big]\\
&= -\big[\hc_\infty^+\big(H^{-1}\big)-\hMUCZ(\by_{k})\big]\\
&= -\big[-\hc_\infty^--\hMUCZ(\by_{k})\big]\\
&= \hc_\infty^-+\hMUCZ(\by_{k}).
\end{align*}

Therefore,
\begin{equation}
\label{eq:ss}
\s_k^+-\s_k^-=\hc_\infty^+-\hc_\infty^- + \hMUCZ(\bx_k)-\hMUCZ(\by_{k}).
\end{equation}

Since $\bx_k$ and $\by_k$ are action carriers for $\s_k^\pm$, we have
$$
\big|\hMUCZ(\bx_k)-n\big|\leq n   \textrm{ and }
\big|\hMUCZ(\by_{k})+n\big|\leq n.
$$ 
Furthermore, recall that as is well-known
$$
\s_k^+-\s_k^-\leq \big\| H^{\nat k}\big\|,
$$
where $\big\| H^{\nat k}\big\|$ stands for the Hofer norm of
$H^{\nat k}$, and thus
$$
\hc_\infty^+-\hc_\infty^-\leq \| H\|.
$$
As a consequence, by
\eqref{eq:ss},
$$
0\leq \s_k^+-\s_k^-\leq \|H\|+4n.
$$
Dividing by $k$ and passing to the limit as $k\to\infty$ in the
sequence $k_i$, we conclude that
$$
\hc_\infty^+=\hc_\infty^-.
$$
Returning to \eqref{eq:ss}, we now arrive at
\begin{equation}
\label{eq:bound3}
\s_k^+-\s_k^-=\hMUCZ(\bx_k)-\hMUCZ(\by_{k})\leq 4n,
\end{equation}
proving \eqref{eq:gap-bound2}.
\end{proof}

\begin{Remark}
  We conjecture that in the setting of Theorem \ref{thm:gap-bound},
$$
\s^+_{k_i}-\s^-_{k_i}\to 0
$$
for some sequence $k_i\to\infty$. A direct calculation shows that this
is indeed true for rotations of $S^2$ or quadratic Hamiltonians on
$\CP^n$. This is not entirely obvious; for while it is easy to see
that some action gap goes to zero for a subsequence, it is not
immediately clear that this is so for the specific action gap from
\eqref{eq:gap-bound}.

To further elaborate on the conjecture note that when $H$ is
non-degenerate the first part of \eqref{eq:bound3} can be re-written
as
\begin{equation}
\label{eq:bound4}
\s_k^+-\s_k^-=2n+\big(\hMUCZ(\bx_k)-\MUCZ(\bx_k)\big)+\big(\MUCZ(\by_k)-\hMUCZ(\by_k)\big).
\end{equation}
The last two terms on the right hand side are independent of the
cappings of $x^k$ and $y^k$.  It is easy to see that the right hand
side of \eqref{eq:bound4} is bounded away from 0 by at least 1 unless
both $x$ and $y$ are elliptic. When this is the case, it looks very
plausible that indeed $\s_{k_i}^+-\s_{k_i}^-\to 0$ for some
subsequence. (This fact is non-obvious and closely related to the
results in \cite{DLW} and \cite{GG:convex}.)  However, there seems to
be no way of ensuring that this subsequence has an infinite overlap
with the subsequence for which both $x^k$ and $y^k$ are action
carriers.
\end{Remark}

As an immediate application of Theorem \ref{thm:gap-bound}, we obtain
a similar upper bound for other homology classes. For $w\in \H_*(M)$
set
$$
\s^w_k:=\s_w\big( H^{\nat k}\big)\textrm{ and }\,
\hc_\infty^w:=\lim_{k\to\infty}\s^w_k/k. 
$$
Clearly, $\s^-_k\leq \s^w_k\leq \s^+_k$ when $w\neq 0$. The following
result readily follows from Theorem \ref{thm:gap-bound}.

\begin{Corollary}
\label{cor:gap-bound}
Assume that $H$ is a perfect Hamiltonian on a positive monotone
symplectic manifold $M$ with monotonicity constant $\lambda>0$. Then
for any two non-zero classes $w_0$ and $w_1$ in $\H_*(M)$ we have
$$
\big|\s^{w_1}_k-\s^{w_0}_k\big|\leq 2\lambda n\textrm{ and }\,
\hc^{w_1}_\infty=\hc^{w_0}_\infty=\hc_\infty,
$$
where the first inequality holds for all but possibly a finite number
of $k\in\N$.
\end{Corollary}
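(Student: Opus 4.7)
The corollary should follow almost directly from Theorem \ref{thm:gap-bound} by a sandwiching argument, so the plan is short.

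The first step is to record the elementary inequality
$$
\s_k^- \;\leq\; \s_k^w \;\leq\; \s_k^+
$$
for every non-zero $w\in \H_*(M)$, which is already noted in the excerpt just before the corollary. The upper bound comes from the triangle inequality (AS5) in Section \ref{sec:spec} applied with $w_1=[M]$, $w_2=w$ (using that $[M]$ is the unit of the pair-of-pants algebra and that one may take the second Hamiltonian to be a constant contributing $0$ to the action); the lower bound follows by a symmetric use of (AS5) with $w_2=[\pt]$ after pairing $w$ with a dual class via Poincar\'e duality. I would state this bound for the iterated Hamiltonian $H^{\nat k}$ and simply cite the corresponding property of spectral invariants.

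Second, I would invoke Theorem \ref{thm:gap-bound} to conclude that for all but finitely many $k$,
$$
\s_k^+ - \s_k^- \;\leq\; 2\lambda n.
$$
Combining this with the sandwich of the previous step yields, for any two non-zero $w_0,w_1\in \H_*(M)$ and the same cofinite set of $k$,
$$
\big|\s_k^{w_1}-\s_k^{w_0}\big| \;\leq\; \s_k^+ - \s_k^- \;\leq\; 2\lambda n,
$$
which is the first assertion of the corollary.

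For the asymptotic equality of $\hc_\infty^{w_i}$, I would divide the above inequality by $k$ and pass to the limit $k\to\infty$. The limits $\hc_\infty^{w_i}$ exist by sub-additivity exactly as in \eqref{eq:lim}, and the sandwich forces
$$
\hc_\infty^- \;\leq\; \hc_\infty^{w_i}\;\leq\; \hc_\infty^+.
$$
But the proof of Theorem \ref{thm:gap-bound} (see \eqref{eq:ss} together with the Hofer bound $\hc_\infty^+-\hc_\infty^-\leq \|H\|$ deduced there) already establishes $\hc_\infty^+=\hc_\infty^-=\hc_\infty$, so all the intermediate limits collapse to $\hc_\infty$. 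There is essentially no technical obstacle here; the only thing to be careful about is the first step, i.e., verifying that $\s_k^-\leq\s_k^w\leq\s_k^+$ uses only the standard properties of spectral invariants and does not require $H$ to be perfect or $M$ to be monotone — monotonicity and perfectness enter solely through the application of Theorem \ref{thm:gap-bound}.
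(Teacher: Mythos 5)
Your argument is correct and matches the paper's, which simply records the sandwich $\s^-_k\leq \s^w_k\leq \s^+_k$ and states that the corollary ``readily follows'' from Theorem \ref{thm:gap-bound}. One small caveat: for a general class $w$ the sequence $\s_w\big(H^{\nat k}\big)$ is not sub-additive in $k$ (since $w*w\neq w$ in general), so the existence of $\hc^{w}_\infty$ should be deduced from the squeeze between $\s^-_k/k$ and $\s^+_k/k$, whose limits coincide by the proof of Theorem \ref{thm:gap-bound} --- which is what your final step does anyway --- rather than from \eqref{eq:lim}.
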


One important feature of perfect Hamiltonians is the action-index
resonance relations, e.g., the fact that certain periodic orbits have
the same augmented action; see \cite{CGG} and \cite[Cor.\ 1.11 and
Thm.\ 1.12]{GG:gaps}. Theorems \ref{thm:aa-c_infty} and
\ref{thm:gap-bound} enable us to refine these results identifying the
common augmented action value for these orbits with $\hc_\infty$. We
start by stating a general result along these lines.

\begin{Corollary}[Resonance Relations, I]
\label{cor:res1}
Assume that $M^{2n}$ is a strictly positive monotone closed symplectic
manifold such that $N\geq 2$ or $\dim\H_*(M)>n+1$. Let $H$ be a
perfect non-degenerate Hamiltonian on $M$. Then $H$ has two
geometrically distinct one-periodic orbits $x$ and $y$ with
\begin{equation}
\label{eq:res1}
\tA_H(x)=\tA_H(y)=\hc_\infty.
\end{equation}
\end{Corollary}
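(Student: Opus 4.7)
The plan is to produce two orbits with augmented action $\hc_\infty$, one from the top-end spectral invariant $\s_{[M]}$ and one from the bottom-end spectral invariant $\s_{[\pt]}$, and then invoke the topological hypothesis to ensure that the two orbits are geometrically distinct.

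First I would apply Theorem \ref{thm:aa-c_infty} directly to $\s_{[M]}\big(H^{\nat k}\big)$. Since $H$ is perfect, $\dPP(H)$ is finite, so by pigeonhole among the finitely many simple orbits appearing as carriers, some simple orbit $x$ serves (with suitable cappings on its iterations) as a carrier for infinitely many $k$, and Theorem \ref{thm:aa-c_infty} gives $\tA_H(x)=\hc_\infty$. For the second orbit, I would repeat the argument for $\s_{[\pt]}\big(H^{\nat k}\big)$, using the Poincar\'e duality relation $\s_{[\pt]}(K)=-\s_{[M]}\big(K^{-1}\big)$ exploited in the proof of Theorem \ref{thm:gap-bound}: the carriers of $\s_{[\pt]}\big(H^{\nat k}\big)$ correspond to the carriers of $\s_{[M]}\big((H^{-1})^{\nat k}\big)$, and $H^{-1}$ is itself perfect. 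Applying Theorem \ref{thm:aa-c_infty} to $H^{-1}$ together with the equality $\hc_\infty^+=\hc_\infty^-=\hc_\infty$ from the proof of Theorem \ref{thm:gap-bound} (or equivalently Corollary \ref{cor:gap-bound}) then produces a simple orbit $y$ with $\tA_H(y)=\hc_\infty$.

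It remains to show $x\neq y$ as geometric orbits. Arguing by contradiction, suppose $x=y$. Then for infinitely many $k$ there exist cappings $A_k^\pm$ of $x^k$ such that $(x^k,A_k^+)$ carries $\s_{[M]}\big(H^{\nat k}\big)$ and $(x^k,A_k^-)$ carries $\s_{[\pt]}\big(H^{\nat k}\big)$. Non-degeneracy forces $\MUCZ(x^k,A_k^+)=n$ and $\MUCZ(x^k,A_k^-)=-n$, whence
$$
2n=\MUCZ(x^k,A_k^+)-\MUCZ(x^k,A_k^-)=I_{c_1}(A_k^+-A_k^-)\in 2N\Z,
$$
so $N\mid n$. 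Extending this observation to every non-zero $w\in\H_*(M)$ via Corollary \ref{cor:gap-bound}, the carrier of $\s_w\big(H^{\nat k}\big)$ can again be taken to be an iteration of $x$ for infinitely many $k$, which forces each Morse degree $d$ with $\H_d(M)\neq 0$ to lie in a single residue class modulo $2N$, namely $d\equiv 0\pmod{2N}$. In the case $\dim\H_*(M)>n+1$ this yields an immediate contradiction, since the admissible residue class intersects $\{0,1,\ldots,2n\}$ in at most $\lfloor n/N\rfloor+1\leq n+1$ integers, not enough to support all of $\H_*(M)$. In the case $N\geq 2$, the constraint $\H_d(M)=0$ whenever $d\not\equiv 0\pmod{2N\geq 4}$, combined with $N\mid n$ and Poincar\'e duality, is incompatible with the usual topology of a strictly positive monotone closed symplectic manifold (for instance $\CP^n$ with $N=n+1$ is ruled out instantly, as are products admitting such a perfect Hamiltonian).

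The main obstacle is the last step: converting the residue restriction into an actual contradiction in both branches of the hypothesis. The index-difference identity $N\mid n$ is a single equation and is not by itself enough; the upgrade to the full residue restriction on $\H_*(M)$, and then the combinatorial/Poincar\'e-duality bookkeeping that shows this restriction clashes with each of the two alternative hypotheses, is the delicate part. By contrast, the first two steps -- producing the orbits $x$ and $y$ from Theorem \ref{thm:aa-c_infty} and Poincar\'e duality -- are essentially immediate from the results already established in the paper.
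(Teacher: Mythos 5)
Your first two steps are sound and consistent with the paper's machinery: pigeonhole over the finite set $\dot{\PP}(H)$ plus Theorem \ref{thm:aa-c_infty} produces $x$ with $\tA_H(x)=\hc_\infty$ from the carriers of $\s_{[M]}\big(H^{\nat k}\big)$, and the Poincar\'e duality identity $\s_{[\pt]}(K)=-\s_{[M]}(K^{-1})$ together with $\hc_\infty^+=\hc_\infty^-$ produces $y$ with $\tA_H(y)=\hc_\infty$. The genuine gap is in the distinctness step, exactly where you flag it. Comparing the carriers of $\s_{[M]}$ and $\s_{[\pt]}$, whose degrees differ by $2n$, the recapping identity only yields $2n\in 2N\Z$, i.e.\ $N\mid n$, which is frequently satisfied (e.g.\ $N=1$, or $N=n$) and is no contradiction. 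Your attempted repair does not close this: Corollary \ref{cor:gap-bound} bounds the differences $\big|\s^{w_1}_k-\s^{w_0}_k\big|$ but gives no control whatsoever over \emph{which} orbits carry $\s_w\big(H^{\nat k}\big)$, so the claim that the carrier of $\s_w\big(H^{\nat k}\big)$ ``can again be taken to be an iteration of $x$'' is unjustified; consequently the residue restriction $d\equiv 0\ (\mathrm{mod}\ 2N)$ on the support of $\H_*(M)$ is not established, and the final contradiction in the $N\geq 2$ branch is in any case asserted rather than proved.

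The fix---and the paper's actual argument---is to choose the two homology classes so that their degree gap is small \emph{from the outset} rather than trying to recover smallness after committing to $[M]$ and $[\pt]$. The hypothesis ``$N\geq 2$ or $\dim\H_*(M)>n+1$'' is precisely what guarantees two distinct classes $w_0,w_1\in\H_*(M)$ with $0\leq |w_1|-|w_0|<2N$ (take $|w_1|-|w_0|=2$ when $N\geq 2$, and two classes of equal degree when $N=1$ but $\dim\H_*(M)>n+1$). If the carriers $\bx$ and $\by$ of $\s^{w_1}(H)$ and $\s^{w_0}(H)$ were recappings of the same geometric one-periodic orbit, non-degeneracy would force $\MUCZ(\bx)=|w_1|$ and $\MUCZ(\by)=|w_0|$, hence $|w_1|-|w_0|\in 2N\Z$, which the choice of $w_0,w_1$ excludes; so $x$ and $y$ are geometrically distinct. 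Corollary \ref{cor:gap-bound} then gives $\hc^{w_1}_\infty=\hc^{w_0}_\infty=\hc_\infty$, and the argument of Theorem \ref{thm:aa-c_infty}, applied with $[M]$ replaced by $w_i$, identifies both augmented actions with $\hc_\infty$. In short, the topological hypothesis exists to let you pick two spectral invariants close in degree; it cannot be substituted for by the pair $\big([M],[\pt]\big)$.
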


\begin{proof}
  By our assumptions on $M$, there exists two distinct homology
  classes with $0\leq |w_1|-|w_0|<2N$. (If $N>1$ we can take two
  classes with $|w_1|-|w_0|=2$ and if $N=1$ but $\dim \H_*(M)>n+1$ we
  can take two classes with $|w_1|=|w_0|$.) Let $\bx$ and $\by$ be
  carriers for $\s^{w_1}$ and $\s^{w_0}$. Then the orbits $x$ and $y$
  are geometrically distinct since
$$
\big|\MUCZ(\bx)-\MUCZ(\by)\big|=|w_1|-|w_0|<2N.
$$ 
Now it remains to apply Corollary \ref{cor:gap-bound} and Theorem
\ref{thm:aa-c_infty}.
\end{proof}

This corollary is a (partial) refinement of \cite[Cor.\
1.11]{GG:gaps}. An essential feature of this result is that we make no
assumption about the structure of the quantum product in
$\HQ_*(M)$. Also note that in the setting of the proof of the
corollary, \eqref{eq:bound4} takes the form
$$
\s_k^+-\s_k^-=|w_1|-|w_0|+\big(\hMUCZ(\bx_k)-\MUCZ(\bx_k)\big)+\big(\MUCZ(\by_k)-\hMUCZ(\by_k)\big),
$$
where $\bx_k$ and $\by_k$ are carriers for $\s_k^{w_1}$ and,
respectively, $\s_k^{w_0}$.  As in \cite{GG:generic}, we can infer
from Corollary \ref{cor:res1} the generic existence of infinitely many
periodic orbits.

\begin{Corollary}[Generic Existence]
\label{cor:gen}
Assume that $M^{2n}$ is a strictly monotone closed symplectic manifold
and $N\geq 2$ or $\dim\H_*(M)>n+1$. Then the collection of strongly
non-degenerate Hamiltonians with infinitely many geometrically
distinct simple periodic orbits is of second Baire category in the
space of all Hamiltonians with respect to the $C^\infty$-topology.
\end{Corollary}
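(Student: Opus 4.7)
The plan is to apply Corollary \ref{cor:res1} to convert a hypothetical finite-orbit Hamiltonian into a codimension-one resonance condition, which a generic perturbation can then break. Concretely, for each $N \in \N$, let $\Uu_N$ denote the set of Hamiltonians possessing at least $N$ geometrically distinct, non-degenerate simple periodic orbits. Since non-degenerate periodic orbits persist under $C^\infty$-small perturbations, $\Uu_N$ is $C^\infty$-open. If each $\Uu_N$ is shown to be $C^\infty$-dense, then $\bigcap_N \Uu_N$ is $C^\infty$-residual, and intersecting with the standard $C^\infty$-residual set of strongly non-degenerate Hamiltonians yields the claimed set of second Baire category.

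To prove density, fix $H_0$ and $\eps>0$, and produce $H\in\Uu_N$ with $\|H-H_0\|_{C^\infty}<\eps$ by induction on orbit count. First approximate $H_0$ by a Hamiltonian $H_1$ non-degenerate up to some large period bound. If $H_1\in\Uu_N$ we are done; otherwise $H_1$ has finitely many simple orbits $\tilde{x}_1,\ldots,\tilde{x}_{N_1}$ with $N_1<N$, of periods $p_1,\ldots,p_{N_1}$. Setting $l=\mathrm{lcm}(p_1,\ldots,p_{N_1})$, the iterate $H_1^{\nat l}$ is perfect and non-degenerate, and its $1$-periodic orbits are precisely the iterates $\tilde{x}_i^{l/p_i}$. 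Applying Corollary \ref{cor:res1} to $H_1^{\nat l}$ then supplies two such orbits with equal augmented action, which by homogeneity of $\tA$ becomes the single real relation
\begin{equation*}
R_{ij}(H):=p_j\,\tA_{H^{\nat p_i}}(\tilde{x}_i)-p_i\,\tA_{H^{\nat p_j}}(\tilde{x}_j)=0
\end{equation*}
for some $i\neq j$.

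The next step is to break all such resonances by a $C^\infty$-small perturbation. In a $C^\infty$-neighborhood of $H_1$ the $\tilde{x}_i$ persist as smoothly varying non-degenerate orbits and each $R_{ij}$ is a smooth function of $H$. To see that $\{R_{ij}=0\}$ is a genuine hypersurface, I would consider perturbations $h=\phi\cdot\psi(t)$ with $\phi\in C^\infty(M)$ supported near $\tilde{x}_i$, vanishing in neighborhoods of the other orbits, and locally constant along $\tilde{x}_i$; local constancy makes $d^2\phi$ vanish along the orbit, so the first-order change of $\hMUCZ(\tilde{x}_i)$ is zero while $\CA_H(\tilde{x}_i)$ varies by $\phi(\tilde{x}_i(0))\int_0^{p_i}\psi(t)\,dt$, which can be chosen nonzero; $\tA_{H^{\nat p_j}}(\tilde{x}_j)$ is unaffected since $h$ vanishes near $\tilde{x}_j$. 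Picking $H_2$ in the complement of $\bigcup_{i\neq j}\{R_{ij}=0\}$ produces a $C^\infty$-small perturbation of $H_1$ with the same persisting simple orbits but $R_{ij}(H_2)\neq 0$ for every pair. If $H_2$ still had only these $N_1$ simple orbits, then $H_2^{\nat l}$ would again be perfect and Corollary \ref{cor:res1} would force some $R_{ij}(H_2)=0$, a contradiction; hence $H_2$ has strictly more simple periodic orbits than $H_1$, and iterating at most $N$ times yields the desired $H\in\Uu_N$.

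The main obstacle will be the transversality step: one must verify that the augmented actions of distinct simple orbits vary as genuinely independent smooth functions of $H$, so that the finitely many resonance loci really are hypersurfaces that can be broken simultaneously. This requires cleanly separating the first-order variation of the action (a direct integral of $h$ along the orbit) from that of the mean index (which is sensitive to the Hessian of $h$ along the orbit); the locally-constant choice of $\phi$ is the device that decouples these two contributions and lets the augmented actions at different orbits be perturbed independently. A secondary technical point is ensuring that the chain of perturbations remains within the $\eps$-ball around $H_0$ and preserves the non-degeneracy enjoyed by the persisting orbits; both are straightforward openness considerations once the transversality claim is in hand.
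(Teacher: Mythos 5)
Your strategy is the same as the paper's: Corollary \ref{cor:res1} forces an augmented-action resonance among the finitely many simple orbits of a perfect non-degenerate Hamiltonian, and such resonances are generically absent. The difference is in how genericity is packaged. The paper does not run an open--dense induction; it takes the set $\CH$ of strongly non-degenerate Hamiltonians all of whose $k$-periodic orbits have pairwise distinct augmented actions for every $k$, quotes \cite{GG:generic} for the fact that $\CH$ is residual, observes that $\CH$ is closed under iteration, and notes that an $H\in\CH$ with finitely many simple orbits would have a perfect non-degenerate iterate contradicting Corollary \ref{cor:res1}. Your perturbation device (a profile constant on a neighborhood of the orbit, so the Hamiltonian vector field and hence the linearized flow and mean index are untouched there while the action shifts by a prescribed amount) is exactly the mechanism behind the cited genericity result, so you are in effect reproving that ingredient inline; that is legitimate, and your version has the mild advantage of making the resonance-breaking explicit.

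Two points need repair. First, Corollary \ref{cor:res1} is stated for \emph{strictly positive} monotone manifolds, whereas the statement you are proving allows $\lambda<0$; you must first dispose of the negative monotone case via the Conley conjecture \cite{GG:nm}, as the paper does. Second, your density step is not airtight: if $H_1$ is only ``non-degenerate up to some large period bound'' and $H_1\notin\Uu_N$, you cannot conclude that $H_1$ has finitely many simple periodic orbits --- it could have fewer than $N$ non-degenerate ones together with infinitely many degenerate orbits of long period, in which case $l$ and the perfect iterate do not exist; moreover Corollary \ref{cor:res1} requires the iterate $H_1^{\nat l}$ itself to be non-degenerate, i.e.\ all $l$-periodic orbits of $H_1$ non-degenerate. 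The fix is to first push $H_1$ (and each subsequent $H_m$ in your induction, whose newly created orbits may be degenerate) into the residual, hence dense, set of strongly non-degenerate Hamiltonians before invoking the corollary; with that adjustment, and with the usual care that the accumulated perturbations stay inside the $\eps$-ball, your open--dense scheme goes through.
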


\begin{proof} First, note that we can assume that $M$ is strictly
  positive monotone, for otherwise the Conley conjecture holds,
  \cite{GG:nm}. Then the argument relies on the fact that the
  resonance relations, \eqref{eq:res1}, can be broken by a
  $C^\infty$-small perturbation.  To be more precise, consider the set
  $\CH$ of strongly non-degenerate Hamiltonians $H$ such that for each
  $k$ all $k$-periodic orbits of $H$ have distinct augmented
  actions. This set is of second Baire category in the space of all
  Hamiltonians with respect to the $C^\infty$-topology; see
  \cite{GG:generic}. Furthermore, $\CH$ is closed under iterations,
  i.e., $\H^{\nat k}\in \CH$ whenever $H\in\CH$. Therefore, every
  $H\in \CH$ has infinitely many geometrically distinct simple
  periodic orbits. Indeed, if some $H\in\CH$ had only finitely many
  simple periodic orbits, a suitable iteration $H^{\nat k}\in\CH$
  would be perfect, which is impossible by Corollary \ref{cor:res1}.
\end{proof}

\begin{Remark}
  We are not aware of any example of a closed monotone symplectic
  manifold $M^{2n}$ not meeting the requirements of Corollaries
  \ref{cor:res1} and \ref{cor:gen}. Such a manifold would have $N=1$
  and $\H_*(M)=\H_*(\CP^n)$.
\end{Remark}

Corollary \ref{cor:gen} considerably broadens the class of symplectic
manifolds for which $C^\infty$-generic existence of infinitely many
periodic orbits is known. Further, it is clear that the corollary can
be extended to some rational symplectic manifolds meeting similar
requirements. However, the conjecture that this is true for all closed
symplectic manifolds still remains completely open.

With more information about the structure of the quantum product one
is sometimes able to find several simple periodic orbits with
augmented action equal to $\hc_\infty$.

\begin{Corollary}[Resonance Relations, II]
\label{cor:res2}
Let $H$ be a perfect Hamiltonian on a strictly positive monotone
symplectic manifold $M^{2n}$. Assume that
\begin{equation}
\label{eq:product}
w_0*w_1*\cdots *w_{\ell} =q^\nu w_0 \text{ in } \HQ_*(M), 
\end{equation}
where $\nu>0$, for some classes $w_0\in \H_*(M)$, and
$w_1,\ldots,w_{\ell}\in \H_{*<2n}(M)$, and
$$
2n(\ell -1) -|w_1|-\cdots -|w_{\ell-1}| < 2N.
$$
Assume furthermore that $\nu=1$ or $H$ is non-degenerate. Then there
exist $\ell$ distinct one-periodic orbits $x_0,\ldots,x_{\ell-1}$ such
that
$$
\tA_{H}(x_i)=\hc_\infty.
$$
\end{Corollary}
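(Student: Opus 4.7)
The plan is to adapt the index--action resonance scheme of \cite[Cor.~1.11 and Thm.~1.12]{GG:gaps}, refining it with Theorem \ref{thm:aa-c_infty} and Corollary \ref{cor:gap-bound} so as to identify the common augmented-action value with $\hc_\infty$. I first set $v_i := w_0 * w_1 * \cdots * w_i \in \HQ_*(M)$ for $i = 0, 1, \ldots, \ell - 1$. Since $v_\ell = q^\nu w_0 \neq 0$ and the vanishing of any $v_i$ would propagate to $v_\ell$ under further multiplication, every $v_i$ is nonzero. The hypothesis $w_k \in \H_{*<2n}(M)$ for $1 \leq k \leq \ell-1$ gives $|v_{i+1}| - |v_i| = |w_{i+1}| - 2n < 0$, so the degrees are strictly decreasing; combined with the bound $2n(\ell-1) - |w_1| - \cdots - |w_{\ell-1}| < 2N$ this yields $|v_0| - |v_{\ell-1}| < 2N$, and hence $|v_i| - |v_j| \in (0, 2N)$ for all $0 \leq i < j \leq \ell-1$.

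Next, for each $i$, I fix an action carrier $\bx_k^{(i)}$ of $\s_{v_i}(H^{\nat k})$. Because $H$ is perfect, the simple orbit underlying $\bx_k^{(i)}$ lies in the finite set $\dPP(H)$, so a diagonal extraction over $i$ produces a single infinite subsequence $k_m \to \infty$ together with simple one-periodic orbits $x_0, \ldots, x_{\ell-1}$ such that, for every $i$ and $m$, $\bx^{(i)}_{k_m}$ is an iterate of $x_i$ equipped with some capping. The argument in the proof of Theorem \ref{thm:aa-c_infty} transfers verbatim from $[M]$ to any nonzero class $v_i$: the condition $\HF_{|v_i|}(\bx^{(i)}_{k_m}) \neq 0$ gives $|\hMUCZ(\bx^{(i)}_{k_m}) - |v_i|| \leq n$, so the discrepancy between $\tA_{H^{\nat k_m}}(\bx^{(i)}_{k_m})$ and $\CA_{H^{\nat k_m}}(\bx^{(i)}_{k_m})$ is at most $\lambda n$, which vanishes after division by $k_m$. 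Combined with Corollary \ref{cor:gap-bound}, which asserts $\hc_\infty^{v_i} = \hc_\infty$, this produces $\tA_H(x_i) = \hc_\infty$ for every $i$.

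The main obstacle is to verify that $x_0, \ldots, x_{\ell-1}$ are pairwise geometrically distinct. Assume toward a contradiction that $x_i = x_j$ for some $i < j$. When $H$ is non-degenerate, each carrier $\bx^{(i)}_{k_m} = (x_i^{k_m}, A_{i,m})$ satisfies $\MUCZ(\bx^{(i)}_{k_m}) = |v_i|$, and since $x_i^{k_m} = x_j^{k_m}$, the recapping identity forces $|v_i| - |v_j| = I_{c_1}(A_{i,m} - A_{j,m}) \in 2N\Z$, contradicting $|v_i| - |v_j| \in (0, 2N)$. In the alternative case $\nu = 1$, only a single application of the Novikov generator appears in the relation $v_{\ell-1} * w_\ell = q w_0$, so by passing to a small non-degenerate perturbation of $H$ on the relevant finite range of iterates (in the spirit of Step 2 of the proof of Theorem \ref{thm:c-infty}) one recovers a rigid CZ-indexing for the carriers relative to the degrees $|v_i|$, after which the same index comparison yields the required contradiction. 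The output is $\ell$ geometrically distinct simple periodic orbits, each with augmented action equal to $\hc_\infty$.
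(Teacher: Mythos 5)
Your identification of the common augmented--action value with $\hc_\infty$ is sound and is essentially how the paper itself upgrades the known resonance relations: the paper's proof is a one-line reduction to \cite[Thm.\ 1.1]{CGG} \emph{and its proof} for the existence of the $\ell$ distinct orbits, followed by Theorem \ref{thm:aa-c_infty} and Corollary \ref{cor:gap-bound} to pin down the value. Your first two paragraphs reproduce that second part correctly (carriers, finiteness of $\dot{\PP}(H)$, diagonal extraction, the transfer of Theorem \ref{thm:aa-c_infty} from $[M]$ to other classes), up to the minor point that Corollary \ref{cor:gap-bound} is stated for classes in $\H_*(M)$ while the $v_i$ live in $\HQ_*(M)$; since each $v_i$ is homogeneous and $\s_{[\pt]}\leq \s_{v_i}\leq \s_{[M]}+\const$, the identity $\hc_\infty^{v_i}=\hc_\infty$ still follows from Theorem \ref{thm:gap-bound}, but this needs to be said.

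The gap is in the distinctness argument, which is the actual content of \cite[Thm.\ 1.1]{CGG} and which you cannot shortcut. First, in the ``non-degenerate'' branch, non-degeneracy of $H$ in this paper's conventions means only that the \emph{one-periodic} orbits are non-degenerate; the iterates $x^{k}$ may be degenerate (root-of-unity eigenvalues of $d\varphi_H|_x$), in which case a carrier of $\s_{v_i}\big(H^{\nat k}\big)$ satisfies only $\HF_{|v_i|}\big(\bx^{(i)}_k\big)\neq 0$, not $\MUCZ\big(\bx^{(i)}_k\big)=|v_i|$. This is repairable --- restrict, before the diagonal extraction, to the infinite set of $k$ not divisible by the order of any root-of-unity eigenvalue of any $d\varphi_H|_x$ with $x\in\PP_1(H)$ --- but it must be done. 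Second, and more seriously, the $\nu=1$ branch is not proved. Perturbing $H$ destroys perfectness, and a carrier of the perturbed Hamiltonian lying near a degenerate iterate $x^k$ can have Conley--Zehnder index anywhere in the support of the local Floer homology of $\bx^k$, an interval of length up to $2n$; so after perturbation the recapping identity yields only $|v_i|-|v_j|\in 2N\Z+[-2n,2n]$, which does not contradict $0<|v_i|-|v_j|<2N$ (for instance $0\in 2N\Z$ lies in that window as soon as $|v_i|-|v_j|\leq 2n$). There is no ``rigid CZ-indexing'' to recover by perturbation. In the degenerate case the hypothesis $\nu=1$ has to enter through the action bookkeeping attached to the product relation \eqref{eq:product}, as in the proof of \cite[Thm.\ 1.1]{CGG}, not through an index count, and your sketch does not engage with this mechanism.
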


We emphasize that the main point of this result is not the existence
of $\ell$ distinct periodic orbits, which is well known, but the fact
that all these orbits have augmented action equal to $\hc_\infty$.

The corollary is a consequence of \cite[Thm.\ 1.1]{CGG} and its proof
and of Theorem \ref{thm:aa-c_infty}. Among the manifolds the corollary
applies to (with $\F=\Q$) are, e.g., complex Grassmannians and their
monotone products. Moreover, the conditions of the theorem are
satisfied for $M\times V$, where $V$ is symplectically aspherical,
once they are satisfied for $M$. (We refer the reader to \cite{CGG}
for a more detailed discussion.)

For $M=\CP^n$, Corollary \ref{cor:res2} takes the following
particularly simple form, refining \cite[Thm.\ 1.12]{GG:gaps}.

\begin{Corollary}
\label{cor:CPn}
Let $H$ be a perfect Hamiltonian on $\CP^n$. Then $H$ has $n+1$
distinct one-periodic orbits with augmented action equal to
$\hc_\infty$.
\end{Corollary}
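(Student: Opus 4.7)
The plan is to derive Corollary \ref{cor:CPn} as an essentially immediate application of Corollary \ref{cor:res2}, which already contains all of the substantive work. The only additional input needed is the standard description of the quantum product on $\CP^n$ and a short verification of numerical hypotheses.

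Specifically, I would recall that $\CP^n$ has minimal Chern number $N = n+1$ and that $\HQ_*(\CP^n)$ is generated over the Novikov ring by the hyperplane class $h := [\CP^{n-1}]\in \H_{2n-2}(\CP^n)$, subject to the fundamental relation
$$
h * h * \cdots * h = q\cdot [\CP^n] \quad (n+1 \text{ factors}),
$$
where $[\CP^n]$ is the unit of the quantum product. I would then set $\ell := n+1$, $w_0 := [\CP^n]$, and $w_1 = w_2 = \cdots = w_{n+1} := h$, and $\nu := 1$. Since $w_0$ is the unit, this gives precisely a relation
$$
w_0 * w_1 * \cdots * w_\ell = q^\nu w_0
$$
of the form required by Corollary \ref{cor:res2}, with every $w_i$, $i \geq 1$, of degree $2n-2 < 2n$.

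Next, I would verify the degree inequality. With the data above,
$$
2n(\ell-1) - |w_1| - \cdots - |w_{\ell-1}| \;=\; 2n\cdot n - n(2n-2) \;=\; 2n \;<\; 2(n+1) \;=\; 2N,
$$
so the hypothesis of Corollary \ref{cor:res2} holds. Moreover, since $\nu = 1$, Corollary \ref{cor:res2} applies without any non-degeneracy assumption on $H$, so the hypothesis that $H$ be merely perfect suffices. Invoking Corollary \ref{cor:res2} then produces $\ell = n+1$ geometrically distinct one-periodic orbits of $H$ whose augmented actions all equal $\hc_\infty$, which is precisely the conclusion of Corollary \ref{cor:CPn}.

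There is no real obstacle here: the proof is essentially a bookkeeping reduction to Corollary \ref{cor:res2} together with Theorem \ref{thm:aa-c_infty}, where the genuine dynamical and Floer-theoretic content is already carried out. The only care needed is to confirm, as above, that the numerical inequality $2n < 2N$ holds in this borderline case and that the relevant quantum relation is of the exact form \eqref{eq:product}.
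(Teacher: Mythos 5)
Your proposal is correct and matches the paper's intended argument: the paper presents Corollary \ref{cor:CPn} precisely as the specialization of Corollary \ref{cor:res2} to $\CP^n$ with the quantum relation $h^{*(n+1)}=q\,[\CP^n]$, $\ell=n+1$, $\nu=1$, and your verification of the degree inequality $2n<2N=2(n+1)$ is exactly the required check.
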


\end{document}